\documentclass[a4paper, 12pt]{article}

\usepackage[sort&compress]{natbib}
\bibpunct{(}{)}{;}{a}{}{,} 

\usepackage{amsthm, amsmath, amssymb, mathrsfs, multirow, url, subfigure}
\usepackage{graphicx} 
\usepackage{ifthen} 
\usepackage{amsfonts}
\usepackage[usenames]{color}

\usepackage[margin=1in]{geometry}
\usepackage{fancyhdr}
\pagestyle{fancy}
\lhead{\scalebox{0.25}{
\includegraphics{}}
\rhead{
{\small 
\url{www.researchers.one/articles/21.01.00001}} 
}
}


\theoremstyle{plain} 
\newtheorem{thm}{Theorem}

\newtheorem{lem}{Lemma}

\theoremstyle{definition}

\theoremstyle{remark}

\newcommand{\prob}{\mathsf{P}} 
\newcommand{\E}{\mathsf{E}}
\newcommand{\var}{\mathsf{V}}

\newcommand{\nm}{{\sf N}}

\newcommand{\chisq}{{\sf ChiSq}}

\newcommand{\RR}{\mathbb{R}}

\newcommand{\eps}{\varepsilon}

\DeclareMathOperator{\logit}{logit}

\newcommand{\tR}{\widetilde{R}}

\title{Asymptotically optimal inference in sparse sequence models with a simple data-dependent measure\footnote{This work is partially supported by the U.S.~National Science Foundation, DMS--1811802.}}
\author{
Ryan Martin\footnote{Department of Statistics, North Carolina State University, {\tt rgmarti3@ncsu.edu}}
}
\date{\today}

\begin{document}

\maketitle 

\thispagestyle{fancy}

\begin{abstract}
For high-dimensional inference problems, statisticians have a number of competing interests.  On the one hand, procedures should provide accurate estimation, reliable structure learning, and valid uncertainty quantification.  On the other hand, procedures should be computationally efficient and able to scale to very high dimensions.  In this note, I show that a very simple data-dependent measure can achieve all of these desirable properties simultaneously, along with some robustness to the error distribution, in sparse sequence models. 

\smallskip

\emph{Keywords and phrases:} high-dimensional inference; concentration rate; structure learning; uncertainty quantification; variational approximation.
\end{abstract}

\section{Introduction}
\label{S:intro}

Dating back at least to \citet{stein1956, stein1981}, a fundamental problem in statistical inference is that of estimating a high-dimensional mean vector under additive noise.  Specifically, suppose that the observable data is $Y^n = (Y_1,\ldots,Y_n)^\top$ with posited model 
\begin{equation}
\label{eq:model}
Y_i = \theta_i + Z_i, \quad i=1,\ldots,n,
\end{equation}
where the errors, $Z_1,\ldots,Z_n$, are independent and identically distributed (iid) with mean zero and subgaussian tails; see Section~\ref{S:properties} for specifics.  While I don't assume the error distribution to be normal, I do assume that it's fully {\em known}, with no parameters to be estimated.  The goal is inference on $\theta=(\theta_1,\ldots,\theta_n)^\top$. 

In practical applications using models like in \eqref{eq:model}, e.g., \citet{efron2010book}, the index $n$ is generally quite large, which makes this a {\em high-dimensional} inference problem.  As is often the case in such problems, certain structural assumption on $\theta = (\theta_1,\ldots,\theta_n)^\top$ are necessary in order to achieve accurate estimation.  Here, the structural assumption I'll consider is {\em sparsity}---most of the entries in the $\theta$ vector are zero.  This notion of sparsity is consistent with the science: in genomics applications, the biology says that only a relatively small number of genes would be associated with a particular phenotype.  Mathematically weaker notions of sparsity are possible, ones that don't assume any exact zeros, which I discuss below in Section~\ref{S:discuss}.  

There now many different approaches to this problem.  On the non-Bayesian side, the focus has been on developing shrinkage and thresholding estimators, and notable references include \citet{james.stein.1961}, \citet{efronmorris1973}, \citet{donohojohnstone1994b}, and \citet{abramovich2006}, among others.  On the Bayesian side, the strategy is to construct suitable sparsity-inducing priors.  This can be done using continuous shrinkage priors \citep[e.g.,][]{dunson.shrinkage, bhadra.hsplus.2017, carvalho.polson.scott.2010}, or with spike-and-slab priors \citep[e.g.,][]{castillo.vaart.2012, johnstonesilverman2004, martin.walker.eb}.  Inspired by the developments in \citet{belitser.ddm} and \citet{belitser.nurushev.uq}, here I want to take a different approach.  In particular, I construct a so-called {\em data-dependent measure} which is based on a simple approximation to a more complicated (empirical) Bayes formulation developed in \citet{martin.mess.walker.eb} and specialized to the present context by \citet{ebcvg}.  Computation of this data-dependent measure requires no Markov chain Monte Carlo (MCMC) or optimization---it has a simple, intuitive, and explicit form.  Most importantly, its theoretical properties are optimal, or nearly so, in every relevant sense:
\begin{itemize}
\item it concentrates asymptotically around the true sparse mean vector $\theta^\star$ at the minimax optimal rate, adaptive to the unknown sparsity level; 
\vspace{-2mm}
\item its mean vector is an asymptotically, adaptively minimax estimator;
\vspace{-2mm}
\item it concentrates asymptotically on a subspace of $\RR^n$ whose dimension is roughly the same as the effective dimension of $\theta^\star$; 
\vspace{-2mm}
\item it consistently selects the non-zero entries of $\theta^\star$; 
\vspace{-2mm}
\item and it provides asymptotically valid uncertainty quantification. 
\end{itemize}
The majority of these properties are familiar, perhaps only the last one requires further explanation.  From this data-dependent measure, one can readily extract credible sets that capture a specified fraction of the distribution's mass.  A question is if that credible set is also asymptotically a confidence set in the sense that it covers the true $\theta^\star$ with probability approximately equal to that same fraction.  This has been a hot topic in the Bayesian literature recently \citep[e.g.,][]{pas.szabo.vaart.uq, szabo.vaart.zanten.2015, rousseau.szabo.2020, castillo.szabo.2020}, and here I establish this property for my simple data-dependent measure.  The ideas and techniques developed in \citet{belitser.ddm} and \citet{belitser.nurushev.uq} will prove to be useful in this regard. 

The remainder of the paper is organized as follows.  Section~\ref{S:background} gives some background on an approach for high-dimensional inference that uses empirical or data-driven priors.  The data-dependent measure investigated here originally arose as a variational approximation to this empirical Bayes-style posterior distribution.  Section~\ref{S:properties} investigates the asymptotic concentration properties of the data-dependent measure, providing justification for the claims made in the bulleted list above.  Some comments about computation of the data-dependent measure are given in Section~\ref{S:example}, along with an illustration to show that its theoretical and computational simplicity don't come at the cost of statistical inefficiency in finite samples.  Concluding remarks are given in Section~\ref{S:discuss} and all the proofs are presented in Appendix~\ref{S:proofs}.

\section{A simple data-dependent measure}
\label{S:background}

\subsection{Empirical prior and its posterior}

While I'm not doing so here in this paper, it is common to assume that the $Y_i$'s are independent and normally distributed, i.e., $Y_i \sim \nm(\theta_i, \sigma^2)$, for $i=1,\ldots,n$, with known variance $\sigma^2$.  This distributional assumption determines a likelihood function and, in turn, a Bayes or empirical Bayes approach can be taken.  One such approach developed recently is that in \citet{martin.walker.eb}, which makes us of an {\em empirical} or {\em data-dependent} prior, that is, a prior distribution that directly depends on the data.  The details here are based on the developments in \citet{martin.mess.walker.eb} for the high-dimensional linear regression problem; see \citet{martin.walker.deb} for some general theory on empirical priors and properties of the corresponding posterior distributions. 

In the approach of \citet{martin.walker.deb}---see \citet{ebcvg} for the sequence model special case---the prior is formulated by first expressing $\theta$ as $(S,\theta_S)$, where $S \subseteq \{1,2,\ldots,n\}$ represents the configuration of zeros and non-zeros and $\theta_S$ the configuration-specific parameters.  Then the (empirical) prior for $\theta$, denoted by $\Pi_n$, with subscript ``$n$'' to indicate data dependence, is specified hierarchically as follows.
\begin{itemize}
\item {\em Marginal prior for $S$:} The the prior on the size/cardinality $|S|$ of $S$ has a mass function $f_n$, and the conditional prior for $S$, given its cardinality $|S|=s$ is uniform over the $\binom{n}{s}$ subsets of size $s$.  Several different forms of $f_n$ are discussed in \citet{ebcvg}, but I'll not need these for what I plan to do here.  
\vspace{-2mm}
\item {\em Conditional prior for $\theta_S$, given $S$:} With the configuration $S$ given, it's determined that $\theta_i=0$ for $i \not\in S$, so only a prior for $\theta_S = \{\theta_i: i \in S\}$ is necessary.  For that prior, the choice in \citet{ebcvg} is 
\[ (\theta_S \mid S) \sim \nm_{|S|}(Y_S, \sigma^2 \gamma^{-1} I_{|S|}), \]
where $\sigma^2$ is the assumed value of the variance in the normal model, $\gamma > 0$ is a prior hyperparameter, and $I_{|S|}$ is the $|S| \times |S|$ identity matrix.  Note that this conditional prior depends on data through the centering around $Y_S = \{Y_i: i \in S\}$.  
\end{itemize} 

With this empirical prior and the normal likelihood, \citet{ebcvg} described construction of a corresponding posterior distribution as 
\[ \Pi^n(d\theta) \propto L_n^\alpha(\theta) \, \Pi_n(d\theta), \]
where $L_n(\theta) = (2\pi\sigma^2)^{-n/2} \exp\{-\frac{1}{2\sigma^2}\|Y-\theta\|^2\}$ is the likelihood function and $\alpha \in (0,1)$ is a fractional power.  An explanation of the role played by $\alpha$ is given in \citet{martin.walker.deb} and the references therein.  Again, I'll not be using this posterior so it's not necessary for me to give an explanation of $\alpha$ here.  

\subsection{An approximation}


Computation of the posterior $\Pi^n$ requires MCMC, but this can be done rather efficiently, as demonstrated in \citet{ebcvg} and \citet{erven.szabo.2020}, compared to the proposed Monte Carlo computations for the horseshoe and other priors. The same is true for the high-dimensional regression version; see, e.g., \citet{martin.mess.walker.eb} and \cite{ebpred}. However, as shown in \citet{ray.szabo.vb} and elsewhere, simple and computationally efficient variational approximations of these high-dimensional posterior are possible.  This inspired \citet{vebreg} to develop a corresponding variational approximation for the empirical prior formulation.  

The jumping off point for \citet{vebreg} was the recognition that the empirical prior described above can be written in a very simple form.  Indeed, the prior assumes that $\theta_1,\ldots,\theta_n$ are independent and the respective marginal distributions are  
\begin{equation}
\label{eq:sas}
\theta_i \sim \lambda_n \, \nm(Y_i, \sigma^2 \gamma^{-1}) + (1-\lambda_n) \, \delta_0, \quad i=1,\ldots,n, 
\end{equation}
where $\lambda_n$ is the prior inclusion probability, which depends on the sample size $n$ but not on the particular index $i$.  It's relatively easy to show that $\lambda_n = n^{-1} \E|S|$, where $\E|S|$ is the prior mean for $|S|$ under $f_n$.  For the two kinds of prior mass function $f_n$ they considered, it follows that, for a constant $a > 0$, 
\begin{equation}
\label{eq:lambda}
\lambda_n = O(n^{-(a+1)}), \quad n \to \infty.
\end{equation}
That $\lambda_n$ is vanishing with $n$---and faster than $n^{-1}$---is consistent with the idea that $\theta$ is believed to be sparse.  For simplicity in what follows, I'll take $\lambda_n = n^{-(1+a)}$.  

When the prior can be expressed in the basic spike-and-slab form \eqref{eq:sas}, it is not too difficult to derive a variational approximation to the posterior distribution.  Indeed, consider a mean-field approximation family of the form 
\[ \bigotimes_{i=1}^n \{ \phi_i \, \nm(\mu_i, \tau_i^2) + (1-\phi_i) \, \delta_0 \}. \]
This corresponds to independent components, each being a mixture of a normal and a point mass at 0, but with component-specific parameters.  The variational approximation proceeds by finding the set of parameters $\{(\mu_i, \tau_i^2, \phi_i): i=1,\ldots,n\}$ that minimizes the Kullback--Leibler divergence of the posterior distribution $\Pi^n$ from the above family.  As \citet{vebreg} show, there are closed-form expressions---no update equations as is typical---for those ``best'' parameters:
\begin{align}
\mu_i & = y_i \notag \\
\tau_i^2 & = \sigma^2 (\alpha + \gamma)^{-1} \label{eq:pars} \\
\logit(\phi_i) & = \logit(\lambda_n) + \tfrac12  \log\tfrac{\gamma}{\alpha+\gamma}+\tfrac{\alpha}{2\sigma^2}y_i^2. \notag 
\end{align}
Here, $\logit(\phi) = \log\{\phi / (1-\phi)\}$.  Note that the variance component $\tau_i^2$ is actually the same for each $i$. Moreover, the weights $\phi_i$ that indicate whether a $\theta_i$ is zero or non-zero are increasing in $y_i^2$, as one would expect.

\subsection{My proposal}

My proposal here in this paper is to define a data-dependent measure 
\begin{equation}
\label{eq:ddm}
\Delta^n = \bigotimes_{i=1}^n \{ \phi_i \, \nm(\mu_i, \tau_i^2) + (1-\phi_i) \, \delta_0 \},  
\end{equation}
with the specific parameters \eqref{eq:pars} plugged in.  This is different from the perspective in \citet{vebreg} because I'm not starting with a normal model, constructing a posterior distribution based on an empirical prior, and then developing a variational approximation.  Instead, I'm directly defining $\Delta^n$ as the data-dependent measure I intend to use for inference on $\theta$.  Consequently, there are no choices about priors to be explained or claims of the variational approximation's accuracy to be justified.  Whether $\Delta^n$ is a reasonable procedure to use rests entirely on what properties it possesses and, as I show next in Section~\ref{S:properties}, its properties are optimal in every practical respect.

\section{Asymptotic properties}
\label{S:properties}

\subsection{Concentration rates}

To set the scene, recall that the errors $Z_1,\ldots,Z_n$ are iid copies of a random variable $Z$ whose distribution is known.  Moreover, I'll assume that $Z$ is subgaussian in the sense that the moment-generating function of $Z$ satisfies 
\begin{equation}
\label{eq:subgaussian}
\E \exp(t Z) \leq \exp(\sigma^2 t^2 / 2), \quad \text{all $t \in \RR$}, 
\end{equation}
where $\sigma$ is a scale parameter, often called the {\em variance proxy}, such that $\sigma^2 \geq \var(Z)$.  Of course, this covers the Gaussian case, but there are other examples too, including bounded random variables; see, e.g., \citet{boucheron.etal.book}.  One key property of subgaussian random variables is that they have exponential tail probability bounds; see Appendix~\ref{SS:prelim} below.  In addition, the tails are sufficiently thin to ensure that the moment-generating function of $(Z/\sigma)^2$ exists in an interval that contains the origin.  What's relevant to the analysis here is the upper endpoint of that interval, which I'll denote as $T > 0$.  The actual endpoint depends on the specific form of the $Z$ distribution, e.g., if $Z$ is Gaussian, then $T=1/2$.  However, the largest bound that I'm aware of that covers all subgaussian cases simultaneously is $T=1/4$ \citep[][Appendix~B]{honorio14}.

As indicated above, interest is in cases where the true mean vector $\theta^\star$ is {\em sparse}, so I need to make this structural assumption precise.  For a generic vector $\theta \in \RR^n$, let $S_\theta$ denote its configuration, i.e., $S_\theta = \{i: \theta_i \neq 0\}$.  Let $|S_\theta|$ denote the cardinality of the configuration.  Of course, $|S_{\theta^\star}| \leq n$ but I have in mind cases where the inequality is strict, even/especially cases where $|S_{\theta^\star}| \ll n$, since these are the only ones in which accurate estimation of $\theta^\star$ is possible.  To characterize this notion of accuracy, recall that the minimax rate \citep[e.g.,][]{donoho1992} relative to $\ell_2$-error is 
\begin{equation}
\label{eq:eps}
\eps_n^2(\theta^\star) = |S_{\theta^\star}| \log(en / |S_{\theta^\star}|). 
\end{equation}
That is, every estimator $\hat\theta$ satisfies 
\[ \sup_{\theta^\star} \eps_n^{-2}(\theta^\star) \E_{\theta^\star} \|\hat\theta - \theta^\star\|^2 \geq 1. \]
So we say that an estimator is minimax optimal if equality holds up to a constant, 
\begin{equation}
\label{eq:estimator}
\sup_{\theta^\star} \eps_n^{-2}(\theta^\star) \E_{\theta^\star} \|\hat\theta - \theta^\star\|^2 \lesssim 1, 
\end{equation}
where ``$\lesssim$'' denotes inequality up to a universal constant. The following two results show that the data-dependent measure $\Delta^n$ in \eqref{eq:ddm}, with suitable choice of $\alpha$, has this same asymptotic concentration rate property.  

\begin{thm}
\label{thm:rate}
For the data-dependent measure $\Delta^n$ in \eqref{eq:ddm}, suppose that $\lambda_n$ and $\alpha$ in \eqref{eq:pars} satisfy, respectively, \eqref{eq:lambda} and $\alpha < 2T$, for $T$ determined by the subgaussian error distribution.  For $\eps_n^2(\theta^\star)$ defined in \eqref{eq:eps} and any sequence $M_n > 0$ with $M_n \to \infty$, 
\[ \sup_{\theta^\star} \E_{\theta^\star} \Delta^n(\{\theta \in \RR^n: \|\theta - \theta^\star\|^2 > M_n \eps_n^2(\theta^\star)\}) \to 0, \quad n \to \infty. \]
\end{thm}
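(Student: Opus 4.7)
My strategy is to reduce to bounding an expected second moment via Markov's inequality:
\[
\sup_{\theta^\star}\frac{\E_{\theta^\star}\E_{\Delta^n}\|\theta-\theta^\star\|^2}{\eps_n^2(\theta^\star)} \;\lesssim\; 1,
\]
after which dividing by $M_n\to\infty$ delivers the conclusion. Since $\Delta^n$ is a product measure whose $i$th coordinate is $\phi_i\,\nm(Y_i,\tau_i^2)+(1-\phi_i)\delta_0$, a one-line calculation gives
\[
\E_{\Delta^n}(\theta_i-\theta^\star_i)^2 \;=\; \phi_i\bigl(Z_i^2+\tau_i^2\bigr) + (1-\phi_i)(\theta^\star_i)^2,
\]
with $Z_i=Y_i-\theta^\star_i$. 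I would then split the sum over $i$ into the sets $S_{\theta^\star}^{c}$ and $S_{\theta^\star}$.

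For $i\notin S_{\theta^\star}$ only the first term survives. The deterministic bound $\phi_i\le e^{\logit(\phi_i)}\lesssim\lambda_n\,e^{\alpha Z_i^2/(2\sigma^2)}$, combined with the elementary fact $x e^{ax}\le c_{a,a'}\,e^{a'x}$ for $x\ge 0$ and $0<a<a'$, and with the MGF of $(Z/\sigma)^2$ being finite at arguments strictly less than $T$ (this is exactly what $\alpha<2T$ buys), yields $\E\,\phi_i(Z_i^2+\tau_i^2)\lesssim\lambda_n$. Summing over at most $n$ such indices contributes $O(n\lambda_n)=O(n^{-a})$, which is $o(\eps_n^2(\theta^\star))$. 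For $i\in S_{\theta^\star}$ the first term is handled by the trivial $\phi_i\le 1$: $\E\,\phi_i(Z_i^2+\tau_i^2)\lesssim\sigma^2$, summing to $O(|S_{\theta^\star}|)\le\eps_n^2(\theta^\star)$.

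The main obstacle is the shrinkage term $\sum_{i\in S_{\theta^\star}}\E(1-\phi_i)(\theta^\star_i)^2$, which must be controlled uniformly in the signal $\theta^\star_i$. The handle is the pair of complementary bounds $1-\phi_i\le 1$ and $1-\phi_i\le e^{-\logit(\phi_i)}\lesssim\lambda_n^{-1}e^{-\alpha Y_i^2/(2\sigma^2)}$. Writing $Y_i=\theta^\star_i+Z_i$ and using the algebraic inequality $(\theta^\star_i+Z_i)^2\ge (\theta^\star_i)^2/2 - Z_i^2$ to separate the deterministic signal from the noise, the same MGF estimate on $(Z/\sigma)^2$ gives
\[
\E(1-\phi_i)\;\lesssim\;\lambda_n^{-1}\,e^{-\alpha(\theta^\star_i)^2/(4\sigma^2)}.
\]
Therefore $\E(1-\phi_i)(\theta^\star_i)^2\le (\theta^\star_i)^2\min\{1,\,c\lambda_n^{-1}e^{-\alpha(\theta^\star_i)^2/(4\sigma^2)}\}$, an expression that, viewed as a function of $(\theta^\star_i)^2$, peaks near the crossover value $(\theta^\star_i)^2\asymp (4\sigma^2/\alpha)\log(1/\lambda_n)\asymp\log n$, with peak height $\lesssim\log n$. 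Summing over $|S_{\theta^\star}|$ indices contributes $O(|S_{\theta^\star}|\log n)$, which in the sparse regime envisaged in the paper matches $\eps_n^2(\theta^\star)=|S_{\theta^\star}|\log(en/|S_{\theta^\star}|)$ up to a constant.

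Assembling the three pieces gives $\E_{\theta^\star}\E_{\Delta^n}\|\theta-\theta^\star\|^2\lesssim\eps_n^2(\theta^\star)$, and Markov with divisor $M_n\to\infty$ completes the proof. The hardest step is the uniform control of the shrinkage term; the role of the subgaussian hypothesis is made sharp there, as $\alpha<2T$ is exactly what is needed to bound $\E e^{\alpha Z^2/(2\sigma^2)}$ after extracting the deterministic part of $Y_i^2$.
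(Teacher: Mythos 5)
Your proposal follows essentially the same route as the paper: Markov's inequality reduces the claim to the second-moment bound $\E_{\theta^\star}\int\|\theta-\theta^\star\|^2\,\Delta^n(d\theta)\lesssim\eps_n^2(\theta^\star)$ (the paper's Lemma~\ref{lem:L2}), and your coordinatewise decomposition $\phi_i(Z_i^2+\tau_i^2)+(1-\phi_i)\theta_i^{\star 2}$ is identical to the one used there. The only cosmetic difference on the off-support term is that you bound $\E\phi_i Z_i^2\lesssim\lambda_n$ directly via $x e^{ax}\le c\,e^{a'x}$ and the MGF of $(Z/\sigma)^2$ at an argument in $(\alpha/2,T]$, whereas the paper truncates at $|Y_i|\le x$ with $x^2=2\sigma^2\log(n/|S_{\theta^\star}|)$ and uses the tail bound; both are fine, and yours is arguably cleaner.

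The one substantive point concerns the shrinkage term $\sum_{i\in S_{\theta^\star}}\theta_i^{\star2}\,\E(1-\phi_i)$. You honestly arrive at $O(|S_{\theta^\star}|\log n)$, with the worst case at near-threshold signals $\theta_i^{\star2}\asymp\log n$, and you correctly flag that this matches $\eps_n^2(\theta^\star)=|S_{\theta^\star}|\log(en/|S_{\theta^\star}|)$ only when $|S_{\theta^\star}|\le n^{1-\delta}$; for the literal unrestricted $\sup_{\theta^\star}$ in the statement this is a gap, since $\log n\not\lesssim\log(en/|S_{\theta^\star}|)$ when $|S_{\theta^\star}|$ is of order $n$. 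The paper instead claims the stronger per-coordinate bound $\theta_i^{\star2}\E(1-\phi_i)\lesssim 1$ by viewing $u\mapsto c\xi_n^{-1}u e^{-ku}\big/\{1+c\xi_n^{-1}e^{-ku}\}$ through the numerator's gamma form and ``plugging in the mode'' $u=k^{-1}$; but that evaluates the ratio at a single point rather than at its maximizer, and the actual supremum of that same expression, attained near $u\asymp k^{-1}\log(\xi_n^{-1})$, is of order $\log n$ as well. So your accounting is, if anything, the more accurate one; both arguments really deliver $|S_{\theta^\star}|\log n$ and hence require the (implicit, standard) restriction to genuinely sparse $\theta^\star$ for the uniform statement. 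If you want to present this as a complete proof, state that restriction explicitly, or restrict the supremum to $\{\theta^\star:|S_{\theta^\star}|\le s_n\}$ with $s_n=o(n^{1-\delta})$.
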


\begin{thm}
\label{thm:mean}
Under the conditions of Theorem~\ref{thm:rate}, the mean vector $\hat\theta$ derived from the data-dependent measure $\Delta^n$ in \eqref{eq:ddm} satisfies \eqref{eq:estimator}.  
\end{thm}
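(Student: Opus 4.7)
The plan is to deduce Theorem~\ref{thm:mean} from Theorem~\ref{thm:rate} (or, more precisely, the coordinatewise estimates that go into its proof) by a single application of Jensen's inequality. Because $\hat\theta = \int \theta\, \Delta^n(d\theta)$ and $\theta \mapsto \|\theta - \theta^\star\|^2$ is convex,
\begin{equation*}
\|\hat\theta - \theta^\star\|^2 \;\leq\; \int \|\theta - \theta^\star\|^2 \, \Delta^n(d\theta),
\end{equation*}
so it suffices to show that $\E_{\theta^\star} \int \|\theta - \theta^\star\|^2 \, \Delta^n(d\theta) \lesssim \eps_n^2(\theta^\star)$, uniformly in the sparse signal $\theta^\star$.

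Thanks to the product form of $\Delta^n$, the inner integral evaluates coordinatewise and equals
\begin{equation*}
\sum_{i=1}^n \bigl\{ \phi_i (Y_i - \theta_i^\star)^2 + \phi_i \tau_i^2 + (1-\phi_i)(\theta_i^\star)^2 \bigr\} \;=\; \sum_{i=1}^n \bigl\{ \phi_i Z_i^2 + \phi_i \tau_i^2 + (1-\phi_i)(\theta_i^\star)^2 \bigr\},
\end{equation*}
using $Y_i - \theta_i^\star = Z_i$. I would then take $\E_{\theta^\star}$ inside the sum and bound the null coordinates $i \notin S_{\theta^\star}$ and the signal coordinates $i \in S_{\theta^\star}$ separately.

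For a null coordinate, $\theta_i^\star = 0$ and $Y_i = Z_i$, while the logit formula in \eqref{eq:pars} yields the crude bound $\phi_i \lesssim \lambda_n \exp\{\alpha Z_i^2/(2\sigma^2)\}$; since $\alpha < 2T$ is exactly the threshold at which $\E\exp\{\alpha Z^2/(2\sigma^2)\}$ is finite under \eqref{eq:subgaussian}, one gets $\E \phi_i Z_i^2 \lesssim \lambda_n$, and summing over the at most $n$ such indices with $\lambda_n = n^{-(1+a)}$ from \eqref{eq:lambda} produces a negligible $O(n^{-a})$ contribution. The main obstacle is the signal coordinates: the term $(1-\phi_i)(\theta_i^\star)^2$ is small only if the probability of ``missing'' a genuine non-zero $\theta_i^\star$ decays at a rate that offsets $(\theta_i^\star)^2$. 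Establishing a per-coordinate bound of order $\log(en/|S_{\theta^\star}|)$ for $i \in S_{\theta^\star}$---so that the total signal contribution is $O(|S_{\theta^\star}|\log(en/|S_{\theta^\star}|)) = O(\eps_n^2(\theta^\star))$---is the real content, and I anticipate it being handled by reusing the coordinatewise moment estimates that already underlie the proof of Theorem~\ref{thm:rate}.
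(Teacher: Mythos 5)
Your overall strategy is exactly the paper's: Jensen's inequality reduces the claim to the second-moment bound $\E_{\theta^\star}\int\|\theta-\theta^\star\|^2\,\Delta^n(d\theta)\lesssim\eps_n^2(\theta^\star)$, which is precisely Lemma~\ref{lem:L2}, and your coordinatewise expansion of the integral matches the paper's. Your treatment of the null coordinates is fine (and in fact slightly more direct than the paper's truncation of $\E_0\phi_i Y_i^2$ at $x=\{2\sigma^2\log(n/|S^\star|)\}^{1/2}$; your crude bound works because $\alpha<2T$ strictly leaves enough slack for $\E\,Z^2e^{\alpha Z^2/2\sigma^2}$ to be finite). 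But the proof is incomplete where you yourself say the real content lies: you never establish the bound on $\sum_{i\in S^\star}\theta_i^{\star 2}\,\E_{\theta_i^\star}(1-\phi_i)$. Deferring this to ``coordinatewise moment estimates that already underlie the proof of Theorem~\ref{thm:rate}'' is circular in the paper's organization---Theorem~\ref{thm:rate} is itself proved by Markov's inequality from the very same second-moment lemma, so there is no independent estimate there to reuse.

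The missing step is genuinely delicate because $\theta_i^\star$ can be arbitrarily large, so one needs $\E_{\theta_i^\star}(1-\phi_i)$ to decay fast enough in $|\theta_i^\star|$ to beat the factor $\theta_i^{\star 2}$, uniformly over all signal magnitudes. The paper's argument: by convexity of $z\mapsto(1+z)^{-1}$ and Jensen, $\E_{\theta_i^\star}(1-\phi_i)\le 1-\{1+\xi_n^{-1}\E_{\theta_i^\star}e^{-(\alpha/2\sigma^2)Y_i^2}\}^{-1}$; the moment-generating-function bound \eqref{eq:mgf2} gives $\E_{\theta_i^\star}e^{-(\alpha/2\sigma^2)Y_i^2}\le c\,e^{-k\theta_i^{\star 2}}$ with $k=\alpha/2\sigma^2$; hence
\[
\theta_i^{\star 2}\,\E_{\theta_i^\star}(1-\phi_i)\;\le\;\frac{c\,\xi_n^{-1}\,\theta_i^{\star 2}\,e^{-k\theta_i^{\star 2}}}{1+c\,\xi_n^{-1}e^{-k\theta_i^{\star 2}}},
\]
and the right-hand side, viewed as a function of $\theta_i^{\star 2}$, is proportional to a ${\sf Gamma}(2,k)$ density over its normalizing denominator and is maximized at a value bounded in $n$, so each signal coordinate contributes $O(1)$ and the total is $\lesssim|S^\star|\lesssim\eps_n^2(\theta^\star)$. (Note this gives a per-coordinate constant, not the $\log(en/|S^\star|)$ you anticipated needing; either would suffice, but you have proved neither.) Until you supply an argument of this kind, the bound on the signal coordinates---and hence the theorem---is not established.
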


An important observation is that the data-dependent measure $\Delta^n$ is not aware of the sparsity level $|S_{\theta^\star}|$ of the true $\theta^\star$, and yet it concentrates at that specific optimal rate.  This feature is commonly referred to as {\em adaptation}, i.e., the concentration rate of $\Delta^n$ is adaptive to the unknown sparsity level of $\theta^\star$ that determines the optimal rate.  

Of course, the data-dependent measure's (nearly) optimal concentration rate is a plus, but this property alone doesn't imply that $\Delta^n$ is learning the low-dimensional structure in $\theta^\star$.  The next result demonstrates that indeed the data-dependent measure is learning that structure in the sense that the dimension of the space on which $\Delta^n$ concentrates is roughly the same as the effective dimension $|S_{\theta^\star}|$ of $\theta^\star$.  

\begin{thm}
\label{thm:dim}
Under the conditions of Theorem~\ref{thm:rate}, for any $M_n > 1$ such that $M_n \to \infty$, the data-dependent measure $\Delta^n$ in \eqref{eq:ddm} satisfies 
\[ \sup_{\theta^\star}\E_{\theta^\star} \Delta^n(\{\theta \in \RR^n: |S_\theta| > M_n |S_{\theta^\star}|\}) \to 0, \quad n \to \infty. \]
\end{thm}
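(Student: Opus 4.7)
My plan is to reduce the statement to a first-moment bound on $|S_\theta|$ under $\Delta^n$ via Markov's inequality, and then to control the relevant expectation using the moment-generating function of $Z^2$. The key structural observation is that $\Delta^n$ is a product measure whose $i$-th marginal puts mass $1-\phi_i$ at $0$ and the remainder on a continuous component. Hence the inclusion indicators $\mathbf{1}\{\theta_i\neq 0\}$ are independent Bernoulli$(\phi_i)$ under $\Delta^n$, so $|S_\theta|$ is Poisson-binomial with mean $\sum_i\phi_i$. Markov's inequality then gives, for $|S_{\theta^\star}|\geq 1$,
\[
\Delta^n\bigl(\{|S_\theta|>M_n|S_{\theta^\star}|\}\bigr) \;\leq\; \frac{1}{M_n|S_{\theta^\star}|}\sum_{i=1}^n \phi_i,
\]
and the task reduces to bounding $\E_{\theta^\star}\sum_i\phi_i$ from above.

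Next I would exploit the explicit formula for $\logit(\phi_i)$ in \eqref{eq:pars} together with the elementary inequality $x/(1+x)\leq x$ to obtain
\[
\phi_i \;\leq\; C_0\,\lambda_n\,\exp\bigl\{\tfrac{\alpha}{2\sigma^2}Y_i^2\bigr\}, \qquad C_0 := \bigl(\tfrac{\gamma}{\alpha+\gamma}\bigr)^{1/2}.
\]
I would then split the sum into signal indices ($i\in S_{\theta^\star}$, using the trivial bound $\phi_i\leq 1$) and noise indices ($i\notin S_{\theta^\star}$, where $Y_i=Z_i$). For the noise contribution, taking $\theta^\star$-expectations gives $\E_{\theta^\star}\phi_i \leq C_0 K\lambda_n$ with $K := \E\exp\{\tfrac{\alpha}{2}(Z/\sigma)^2\}$. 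This is the only place the distributional hypothesis enters: the assumption $\alpha<2T$ places $\alpha/2$ strictly inside the interval of convergence of the moment-generating function of $(Z/\sigma)^2$, so $K<\infty$. With $\lambda_n = n^{-(1+a)}$, I conclude
\[
\E_{\theta^\star}\sum_{i=1}^n \phi_i \;\leq\; |S_{\theta^\star}| + nC_0K\lambda_n \;=\; |S_{\theta^\star}| + O(n^{-a}).
\]

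Substituting back, for $\theta^\star$ with $|S_{\theta^\star}|\geq 1$ the left side of the theorem is at most $\{1+O(n^{-a})\}/M_n$, which tends to $0$ uniformly in such $\theta^\star$ as $n\to\infty$. The degenerate case $|S_{\theta^\star}|=0$ needs a separate line: the event is then $\{|S_\theta|\geq 1\}$, which has $\Delta^n$-probability at most $\sum_i\phi_i$, with expectation $O(n^{-a})\to 0$ by the noise-index computation. Combining the two cases gives the desired supremum bound. The only genuinely delicate point in the argument is justifying finiteness of $K$; everything else is a bookkeeping exercise around Markov's inequality, so I do not anticipate any real obstacle.
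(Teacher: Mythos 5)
Your proposal is correct and follows essentially the same route as the paper: Markov's inequality applied to $|S_\theta|$, followed by the split of $\E_{\theta^\star}\sum_i\phi_i$ into signal indices (bounded by $|S_{\theta^\star}|$) and noise indices (bounded via $\phi_i\leq e^{\logit(\phi_i)}$ and finiteness of the moment-generating function of $(Z/\sigma)^2$ at $\alpha/2<T$), which is exactly the content of the paper's Lemma~\ref{lem:sum.phi}. Your explicit treatment of the degenerate case $|S_{\theta^\star}|=0$ is a small point of added care that the paper's proof leaves implicit.
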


\subsection{Structure learning}

Theorem~\ref{thm:dim} established that $\Delta^n$ concentrates on a subspace of roughly the effective dimension of $\theta^\star$, but more can be said.  The next result shows that, asymptotically, $\Delta^n$ will not assign positive mass to proper supersets of $S_{\theta^\star}$.  To ensure that all the signals are detectable, an additional assumption about the magnitude of those non-zero $\theta_i^\star$ values is needed.  Specifically, consider 
\begin{equation}
\label{eq:betamin}
\min_{i \in S_{\theta^\star}} |\theta_i^\star| \geq H := \bigl( \tfrac{2\sigma^2 K}{\alpha} \log n \bigr)^{1/2}, \quad \text{for some $K > 2 + a$},
\end{equation}
where $\sigma^2$ is the variance proxy of the error distribution, $\alpha$ is as in \eqref{eq:pars}, and $a$ is as in \eqref{eq:lambda}.  Up to constants, \eqref{eq:betamin} is equivalent to the ``beta-min condition'' common in the high-dimensional estimation literature.  

\begin{thm}
\label{thm:selection}
Under the conditions of Theorem~\ref{thm:rate}, the data-dependent measure $\Delta^n$ satisfies 
\[ \sup_{\theta^\star} \E_{\theta^\star} \Delta^n(\{\theta \in \RR^n: S_\theta \supset S_{\theta^\star}\}) \to 0, \quad n \to \infty. \]
Moreover, if $\theta^\star$ is such that \eqref{eq:betamin} holds, then 
\[ \sup_{\theta^\star} \E_{\theta^\star} \Delta^n(\{\theta \in \RR^n: S_\theta \not\supseteq S_{\theta^\star}\}) \to 0, \quad n \to \infty. \]
If all the above conditions hold, then the two conclusions can be combined, giving 
\begin{equation}
\label{eq:selection}
\E_{\theta^\star} \Delta^n(\{\theta \in \RR^n: S_\theta = S_{\theta^\star}\}) \to 1, \quad n \to \infty. 
\end{equation}
\end{thm}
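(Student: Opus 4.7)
Both halves of the theorem rest on the fact that $\Delta^n$ in \eqref{eq:ddm} is a product of independent spike-and-slab marginals, so under $\Delta^n$ the configuration $S_\theta = \{i : \theta_i \neq 0\}$ has independent Bernoulli$(\phi_i)$ inclusion indicators (the normal component places no mass at $0$). A union bound therefore gives
\[
\Delta^n(\{S_\theta \supsetneq S_{\theta^\star}\}) \leq \sum_{i \notin S_{\theta^\star}} \phi_i,
\qquad
\Delta^n(\{S_\theta \not\supseteq S_{\theta^\star}\}) \leq \sum_{i \in S_{\theta^\star}} (1 - \phi_i),
\]
so it suffices to control $\E_{\theta^\star}[\phi_i]$ uniformly over null coordinates and $\E_{\theta^\star}[1-\phi_i]$ uniformly over signal coordinates.

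For the no-false-positives half I would use the crude inequality $\phi_i \leq e^{\logit \phi_i}$ with the closed-form \eqref{eq:pars} to obtain
\[
\phi_i \leq \frac{\lambda_n}{1-\lambda_n} \sqrt{\tfrac{\gamma}{\alpha+\gamma}} \, e^{\alpha Y_i^2/(2\sigma^2)}.
\]
For $i \notin S_{\theta^\star}$ one has $Y_i = Z_i$, and the hypothesis $\alpha < 2T$ is precisely what places $\alpha/(2\sigma^2)$ inside the interval of convergence of the moment generating function of $(Z/\sigma)^2$. Hence $\E[e^{\alpha Z^2/(2\sigma^2)}]$ is a finite constant, yielding $\E_{\theta^\star}[\phi_i] = O(\lambda_n) = O(n^{-(1+a)})$, and summing the at-most-$n$ null indices gives a bound of order $n^{-a} \to 0$.

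For the no-false-negatives half I would use the dual inequality $1 - \phi_i \leq e^{-\logit \phi_i}$, whose prefactor is of order $n^{1+a}$, reducing the problem to showing $\E_{\theta^\star}[e^{-\alpha Y_i^2/(2\sigma^2)}] = O(n^{-(2+a+\eta)})$ for some $\eta > 0$ whenever $|\theta_i^\star| \geq H$. I would obtain this by splitting on the event $\{|Z_i| \leq \delta |\theta_i^\star|\}$ for a suitable $\delta \in (0,1)$: on that event $Y_i^2 \geq (1-\delta)^2 H^2$, contributing at most $n^{-(1-\delta)^2 K}$; on the complement the subgaussian tail bound gives probability at most $2 \exp(-\delta^2 (\theta_i^\star)^2/(2\sigma^2)) \leq 2 n^{-\delta^2 K/\alpha}$. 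Because $\alpha < 2T \leq 1/2$, a small-enough $\delta$ makes both exponents strictly exceed $2+a$ under the stated condition $K > 2+a$, so $\E_{\theta^\star}[1-\phi_i]$ is polynomially smaller than $1/n$ and the signal sum vanishes. The selection consistency \eqref{eq:selection} then follows from the elementary inclusion $\{S_\theta \neq S_{\theta^\star}\} \subseteq \{S_\theta \supsetneq S_{\theta^\star}\} \cup \{S_\theta \not\supseteq S_{\theta^\star}\}$.

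The main technical obstacle is the sharp control of $\E_{\theta^\star}[e^{-\alpha Y_i^2/(2\sigma^2)}]$ for signal coordinates under only a subgaussian hypothesis on $Z_i$. The naive route, expanding $Y_i^2 = (\theta_i^\star)^2 + 2\theta_i^\star Z_i + Z_i^2$ and applying the subgaussian MGF directly, produces an effective exponent of $(1-\alpha)K$ rather than something close to $K$, which would force a strictly stronger beta-min threshold than $H$. The splitting argument above escapes this loss by trading an exponential decay driven by the signal magnitude against a subgaussian tail in the noise, and the freedom to choose $\delta$ (together with $\alpha$ bounded away from $1$ via $\alpha < 2T$) is precisely what allows the stated condition $K > 2+a$ to be essentially binding.
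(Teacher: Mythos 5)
Your overall decomposition is sound and is genuinely different from (and simpler than) the paper's argument: the paper works with the configuration mass function $\delta^n(S)=\prod_{i\in S}\phi_i\prod_{i\notin S}(1-\phi_i)$, bounds it by the ratio $\delta^n(S)/\delta^n(S^\star)$, restricts to $|S|\le M_n|S^\star|$ via Theorem~\ref{thm:dim}, and then sums combinatorially over configurations; you instead use the elementary union bounds $\Delta^n(S_\theta\supset S_{\theta^\star})\le\sum_{i\notin S^\star}\phi_i$ and $\Delta^n(S_\theta\not\supseteq S_{\theta^\star})\le\sum_{i\in S^\star}(1-\phi_i)$, which sidesteps the combinatorics entirely. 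Your treatment of the null coordinates is correct and matches the computation in Lemma~\ref{lem:sum.phi}: $\E_0\phi_i\le\E_0 e^{\logit\phi_i}\lesssim\xi_n\sim n^{-(1+a)}$ using \eqref{eq:mgf1} and $\alpha<2T$, so the sum over at most $n$ nulls is $O(n^{-a})$.

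The gap is in the signal half. You need $\E_{\theta_i^\star}e^{-\alpha Y_i^2/(2\sigma^2)}=o(n^{-(2+a)})$ to beat the prefactor $\xi_n^{-1}\sim n^{1+a}$ and the sum over up to $n$ signal indices. Your splitting on $\{|Z_i|\le\delta|\theta_i^\star|\}$ yields the two exponents $(1-\delta)^2K$ and $\delta^2K/\alpha$, and you claim a small enough $\delta$ makes both exceed $2+a$ whenever $K>2+a$. That is not so: as $\delta\downarrow 0$ the tail exponent $\delta^2K/\alpha$ collapses to $0$, and as $\delta\uparrow 1$ the on-event exponent collapses. The two constraints $\delta>\sqrt{\alpha(2+a)/K}$ and $\delta<1-\sqrt{(2+a)/K}$ are simultaneously satisfiable only when $K>(2+a)(1+\sqrt{\alpha})^2$, which is strictly stronger than the condition $K>2+a$ in \eqref{eq:betamin} for any fixed $\alpha>0$. (Optimizing $\delta$ gives $\min\{(1-\delta)^2,\delta^2/\alpha\}\le(1+\sqrt{\alpha})^{-2}$, so the loss is unavoidable within this scheme; indeed, for small $\alpha$ the splitting is even worse than the ``naive'' MGF expansion you reject, whose loss is only a factor $(1-\alpha)^{-1}$.) The paper avoids any splitting by invoking \eqref{eq:mgf2} directly: $\E_{\theta_i^\star}e^{-\alpha Y_i^2/(2\sigma^2)}=\E e^{-(\alpha/2)(Z+\theta_i^\star)^2/\sigma^2}\lesssim e^{-(\alpha/2\sigma^2)\theta_i^{\star2}}\le n^{-K}$, which (taking \eqref{eq:mgf2} at face value) delivers exactly the stated threshold $K>2+a$. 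To repair your proof you should either cite \eqref{eq:mgf2} at this step or accept a beta-min constant inflated by $(1+\sqrt{\alpha})^2$; as written, the claim that $K>2+a$ is ``essentially binding'' for your argument is incorrect. The final combination step via $\{S_\theta\neq S_{\theta^\star}\}\subseteq\{S_\theta\supsetneq S_{\theta^\star}\}\cup\{S_\theta\not\supseteq S_{\theta^\star}\}$ is fine.
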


This theorem says that, asymptotically, $\Delta^n$ will not support configurations that contain zero entries in $\theta^\star$.  Moreover, if the non-zero entries in $\theta^\star$ are sufficiently large, in the sense of \eqref{eq:betamin}, then $\Delta^n$ will not support configurations that miss any of those non-zero entries either.  In the latter case, the only option is that $\Delta^n$ asymptotically supports the true configuration $S_{\theta^\star}$, hence it effectively learns the low-dimensional structure in $\theta^\star$.  The result in \eqref{eq:selection} is often referred to as a {\em selection consistency} property, since any reasonable selection procedure based on $\Delta^n$, e.g., 
\[ \hat S = \arg \max_S \delta^n(S) \quad \text{or} \quad \hat S = \{i: \phi_i > 0.5\}, \]
will, for large enough $n$, identify the correct $S_{\theta^\star}$.

\subsection{Uncertainty quantification}

An important question is if inferences derived from the data-dependent measure are reliable in the sense that they control the frequency of errors, at least asymptotically.  There are a number of ways this can be assessed in the present context.  One is to consider certain one-dimensional summaries of the $n$-dimensional vector $\theta$, in particular, linear functionals.  \citet{vebreg} considered this when treating $\Delta^n$ as a variational approximation of the full (empirical) Bayes posterior under a normal model.  Another angle is to consider a credible set for the full $n$-dimensional vector.  This approach has been considered for a variety of different kinds of Bayes and empirical Bayes posterior distributions in the literature, e.g., \citet{pas.szabo.vaart.uq}, \citet{belitser.ddm}, \citet{belitser.nurushev.uq}, and \citet{belitser.ghosal.ebuq}. Here I'm going to derive analogous properties for the simple data-dependent measure $\Delta^n$ in \eqref{eq:ddm} under the same general subgaussian error structure as above. 

Recall that $\hat\theta$ is the mean vector of the data-dependent measure $\Delta^n$.  Define a ball centered around $\hat\theta$ with radius $r > 0$ as 
\[ B_n(r) = \{\theta \in \RR^n: \|\theta - \hat\theta\| \leq r\}. \]
The goal is to choose a data-dependent radius $r=\hat r$ such that the ball approximately achieves a target coverage probability $1-\zeta$ and has near-optimal size.  There are two natural strategies for selecting the radius $\hat r$ based on $\Delta^n$.  The first is motivated by ensuring the ball has sufficient probability under $\Delta^n$, while the second is motivated by achieving the optimal size.  
\begin{enumerate}
\item {\em Quantile-based}.  Set $\hat r = \inf\{r: \Delta^n(\theta: \|\theta-\hat\theta\| \leq r) \geq 1-\zeta\}$.  
\vspace{-2mm}
\item {\em Plug-in estimator-based}.  Define $\hat S = \{i: \phi_i > \tfrac12\}$ and set $\hat r^2 = |\hat S| \log(en / |\hat S|)$.   
\end{enumerate}
The following theorem summarizes the coverage probability and size properties of the two corresponding credible balls.  

\begin{thm}
\label{thm:uq.ball}
Assume the conditions of Theorem~\ref{thm:selection} hold.  Also, let $\Theta_n \subset \RR^n$ denote the set where the condition \eqref{eq:betamin} on the minimum signal size holds. Fix a significance level $\zeta \in (0,\frac12)$ and a threshold $\eta > 0$.  
\begin{enumerate}
\item Let $\hat r$ denote the $\Delta^n$ quantile-based radius defined above.  Then there exists constants $L$ and $M$ such that, for all sufficiently large $n$,  
\[ \sup_{\theta^\star \in \Theta_n} \prob_{\theta^\star}\{ B_n(M g_n \hat r) \not\ni \theta^\star\} \leq \zeta \quad \text{and} \quad \sup_{\theta^\star} \prob_{\theta^\star}\{ \hat r^2 > L \eps_n^2(\theta^\star)\} \leq \eta, \]
where the inflation factor $g_n$ satisfies $g_n = \log(en)$.  
\item Let $\hat r$ denote the plug-in estimator-based radius defined above.  Then there exists constants $L$ and $M$ such that, for all sufficiently large $n$,  
\[ \sup_{\theta^\star \in \Theta_n} \prob_{\theta^\star}\{ B_n(M \hat r) \not\ni \theta^\star\} \leq \zeta \quad \text{and} \quad \sup_{\theta^\star} \prob_{\theta^\star}\{ \hat r^2 > L \eps_n^2(\theta^\star)\} \leq \eta. \]
\end{enumerate}
\end{thm}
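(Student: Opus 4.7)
The overall strategy is to reduce both parts to the selection-consistency event from Theorem~\ref{thm:selection}. Let $E_n = \{\hat S = S_{\theta^\star}\}$ with $\hat S = \{i: \phi_i > \tfrac12\}$. Theorem~\ref{thm:selection} gives $\sup_{\theta^\star \in \Theta_n}\prob_{\theta^\star}(E_n^c) = o(1)$, so any failure on $E_n^c$ can be absorbed. On $E_n$, plugging the subgaussian tail bound for $Z_i$ into the $\logit$ formula in \eqref{eq:pars} and using the beta-min condition \eqref{eq:betamin} forces each $\phi_i$ with $i \in S_{\theta^\star}$ to be close to $1$ and each $\phi_i$ with $i \notin S_{\theta^\star}$ to be close to $0$, uniformly. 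Since $\hat\theta_i = \phi_i y_i$, this yields $\hat\theta_i \approx \theta_i^\star + Z_i$ on $S_{\theta^\star}$ and $\hat\theta_i \approx 0$ elsewhere, so $\|\hat\theta - \theta^\star\|^2 \approx \sum_{i \in S_{\theta^\star}} Z_i^2$. Subgaussian concentration then delivers, for a constant $C = C(\zeta)$,
\[
\sup_{\theta^\star \in \Theta_n}\prob_{\theta^\star}\bigl(\|\hat\theta - \theta^\star\|^2 > C\sigma^2|S_{\theta^\star}|\bigr) \leq \zeta + o(1).
\]

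For the plug-in ball in part~2, $E_n$ forces $\hat r^2 = |\hat S|\log(en/|\hat S|) = \eps_n^2(\theta^\star)$, whence the coverage conclusion is immediate: $\|\hat\theta - \theta^\star\|^2 \leq C\sigma^2|S_{\theta^\star}| \leq M^2 \eps_n^2(\theta^\star) = (M\hat r)^2$ for $M$ large. The size statement, which must hold uniformly over \emph{all} $\theta^\star$, reduces to $|\hat S| \leq L'|S_{\theta^\star}|$ with probability at least $1-\eta$; this follows by rewriting $\phi_i > \tfrac12$ as the threshold $y_i^2 > 2\sigma^2\alpha^{-1}(1+a+o(1))\log n$ and combining a subgaussian union bound on $\{i \notin S_{\theta^\star}\}$ with the moment-generating-function argument behind Theorem~\ref{thm:dim}.

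For the quantile ball in part~1, on $E_n$ the conditional law of $\Delta^n$ is essentially $\bigotimes_{i \in S_{\theta^\star}}\nm(y_i,\tau^2) \otimes \bigotimes_{i \notin S_{\theta^\star}}\delta_0$, so that $\|\theta - \hat\theta\|^2$ under $\Delta^n$ is, up to negligible correction, a $\tau^2 \chi^2_{|S_{\theta^\star}|}$ variable. Two-sided chi-squared quantile bounds then give $\hat r^2 \asymp \tau^2|S_{\theta^\star}|$, and the size conclusion $\hat r^2 \leq L\eps_n^2(\theta^\star)$ follows since $\log(en/|S_{\theta^\star}|) \geq 1$; the supremum over all $\theta^\star$ (beyond $\Theta_n$) is handled by replacing Theorem~\ref{thm:selection} with Theorem~\ref{thm:dim}, which caps $|S_\theta|$ under $\Delta^n$ without requiring beta-min. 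The inflation factor $g_n = \log(en)$ then yields coverage with room to spare, since $(Mg_n\hat r)^2 \gtrsim M^2\log^2(en)\tau^2|S_{\theta^\star}|$ dominates $C\sigma^2|S_{\theta^\star}|$ for any $M \gtrsim \sigma/\tau$.

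The main technical obstacle is making the qualitative ``$\phi_i \approx 1$ on $S_{\theta^\star}$ and $\approx 0$ off $S_{\theta^\star}$'' picture on $E_n$ quantitative enough that the approximation errors $|y_i - \hat\theta_i|$ and the off-support contributions $|\hat\theta_i|$ are negligible both in $\|\hat\theta - \theta^\star\|^2$ and in the chi-squared quantile analysis. Concretely, one must establish that, on $E_n$, each $(1-\phi_i)$ for $i \in S_{\theta^\star}$ and each $\phi_i$ for $i \notin S_{\theta^\star}$ is of order $n^{-c}$ for some $c > 0$, by pushing the subgaussian tail bounds on $Z_i$ through the quadratic exponent $\alpha y_i^2 /(2\sigma^2)$ inside $\logit(\phi_i)$.
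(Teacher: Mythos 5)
Your architecture---condition on the exact-selection event $E_n=\{\hat S=S_{\theta^\star}\}$ and analyze both the estimator and the $\Delta^n$-quantile on that event---is genuinely different from the paper's, and it leaves two real gaps. First, the coverage side does not actually need, and your argument does not actually deliver, the claim $\sup_{\theta^\star\in\Theta_n}\prob_{\theta^\star}(\|\hat\theta-\theta^\star\|^2>C\sigma^2|S_{\theta^\star}|)\leq\zeta+o(1)$. Knowing only that $\phi_i>\tfrac12$ on $S_{\theta^\star}$ and $\phi_i\leq\tfrac12$ off it (which is all $E_n$ gives) says nothing about the sums $\sum_{i\notin S_{\theta^\star}}\phi_i^2Y_i^2$ and $\sum_{i\in S_{\theta^\star}}(1-\phi_i)^2Y_i^2$ that separate $\|\hat\theta-\theta^\star\|^2$ from $\sum_{i\in S_{\theta^\star}}Z_i^2$; you defer exactly this to your ``main technical obstacle'' paragraph without resolving it, and the off-support contribution is in fact of order $\eps_n^2(\theta^\star)$ (with the log factor), not $\sigma^2|S_{\theta^\star}|$, even in expectation. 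The paper avoids all of this: it writes the non-coverage probability as $\prob_{\theta^\star}(\|\hat\theta-\theta^\star\|^2>R^2)+\prob_{\theta^\star}(\hat\rho^2\leq R^2)$ and kills the first term by Markov's inequality applied to the risk bound of Theorem~\ref{thm:mean} (i.e., Lemma~\ref{lem:L2}), uniformly over all $\theta^\star$, with no conditioning and no concentration inequality. Also note that Theorem~\ref{thm:selection} is a statement about the $\Delta^n$-mass $\delta^n(S_{\theta^\star})$, not about the frequentist event $E_n$; you need the small bridge $\delta^n(S_{\theta^\star})>\tfrac12\Rightarrow\hat S=S_{\theta^\star}$ plus Markov to get $\prob_{\theta^\star}(E_n^c)=o(1)$.

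Second, and more seriously, the size bounds must hold uniformly over \emph{all} $\theta^\star$, where \eqref{eq:betamin} fails and $E_n$ is unavailable, so your whole framework breaks there. Your proposed substitute for the quantile radius---``replace Theorem~\ref{thm:selection} with Theorem~\ref{thm:dim}''---cannot work: Theorem~\ref{thm:dim} controls only the $\Delta^n$-law of $|S_\theta|$, not of $\|\theta-\hat\theta\|$, and only up to a factor $M_n\to\infty$, so it cannot yield $\hat r^2\leq L\eps_n^2(\theta^\star)$ for a fixed constant $L$. The paper instead bounds $\Delta^n(\|\theta-\hat\theta\|^2>L\eps_n^2)$ by the triangle inequality through $\theta^\star$ and invokes Theorems~\ref{thm:rate} and~\ref{thm:mean}; that argument is complete and beta-min-free, and you need something like it. The pieces of your sketch that do align with the paper are the lower bound on the quantile radius (your one-sided chi-squared bound is essentially the paper's Anderson's inequality plus lower chi-square tail step) and the observation that $E_n$ pins down $|\hat S|$ exactly, which is a legitimate shortcut past the paper's Chernoff bound for the plug-in coverage---but the unproven ``$\phi_i$ polynomially close to $0$ or $1$'' claims and the missing size arguments mean the proof as written does not close.
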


More-or-less explicit expressions for the constants $(L,M)$ are given in the proof, so one could technically use these values for practical implementation.  However, I make no claims that these constants are optimal, in fact, it's likely that they're conservative.  In any case, the point is simply to say that the data-dependent measure spread is ``right'' in the sense that credible balls with slightly larger than optimal size can achieve the nominal coverage probability.  The additional inflation factor $g_n$ in the quantile-based credible ball is needed because, apparently, the quantile itself is too small by a logarithmic factor.  Similar inflation factors have been needed by other authors proving analogous results \citep[e.g.,][]{belitser.ghosal.ebuq}.

\section{Numerical illustration}
\label{S:example}

Computation of the data-dependent measure $\Delta^n$ in \eqref{eq:ddm} is trivial and fast.  Virtually every summary has a closed-form expression, so it's straightforward to produce the mean $\hat\theta$, to select a set of ``active'' variables via $\hat S = \{i: \phi_i > \tfrac12\}$, and to extract marginal credible intervals for each $\theta_i$.  This can be done almost instantaneously, far faster than the computations using the {\tt horseshoe} package in R \citep{horseshoe.package} and at least most of those methods compared in \citet{erven.szabo.2020}.  

There are a host of available methods that provide high-quality estimation and structure learning.  The theoretical support for uncertainty quantification using the simple data-dependent measure is the chief novelty here, so that's what I'll focus on here.  Although the theory presented above is for the joint credible ball, there is good reason \citep[e.g.,][]{vebreg, ebcvg} to believe that the corresponding marginal credible intervals would be approximately valid too.  So my objective in this section is simply to show that the very fast computations do not come at the expense of validity or efficiency.  That is, this simple data-dependent measure produces marginal credible intervals which are as good or better than those from other methods sharing the same theoretical guarantees but with heavier computational burden.  

Specifically, I redo the simulation study presented in \citet{ebcvg} comparing the coverage probability and mean length of the horseshoe and two empirical prior-based credible intervals.  Let $n=500$ and suppose that the errors are iid standard normal.  Similar to  Section~2 of \citet{pas.szabo.vaart.uq}, consider a case where the first five entries of $\theta^\star$ are relatively large, i.e., $\theta_1^\star=\cdots=\theta_5^\star=7$, the second five are intermediate, i.e., $\theta_6^\star = \cdots = \theta_{10}^\star = 2$; $\theta_{11}^\star$ will vary; and the remaining $\theta_{12}^\star,\cdots,\theta_n^\star$ are 0.  Of interest is to see how large $\theta_{11}^\star$ needs to be in order for the coverage probability to be approximately equal to 0.95, the nominal level.  Figure~\ref{fig:cvg} plots the empirical coverage probability and mean lengths of the four marginal credible intervals for $\theta_{11}$, as a function of the signal size $\theta_{11}^\star$.  Of course, the coverage probability will be low when the signal size is small, so of primary interest is how quickly the coverage probability climbs to near 0.95 as $\theta_{11}^\star$ increases.  It's clear that the data-dependent measure and the empirical prior formulation of \citet{martin.walker.eb} perform comparably in the sense that both get to the target coverage probability by around $\theta_{11}^\star \approx 6$, before the other two.  Interestingly, these two are no less efficient in terms of interval lengths, since they're all close to the optimal ``$2 \times 1.96 = 3.92$'' length marked by the horizontal line on the plot.  

Finally, the data-dependent measure can easily scale to far bigger $n$, e.g., $n \sim 10^6$, while the other methods would have serious difficulties with problems of this size.  A thorough comparison of the proposed data-dependent measure with other fast methods for this problem \citep[e.g.,][]{erven.szabo.2020, ray.szabo.vb, rockova.george.2018} would be an interesting direction to pursue.  

\begin{figure}[t]
\begin{center}
\subfigure[Coverage probability]{\scalebox{0.6}{\includegraphics{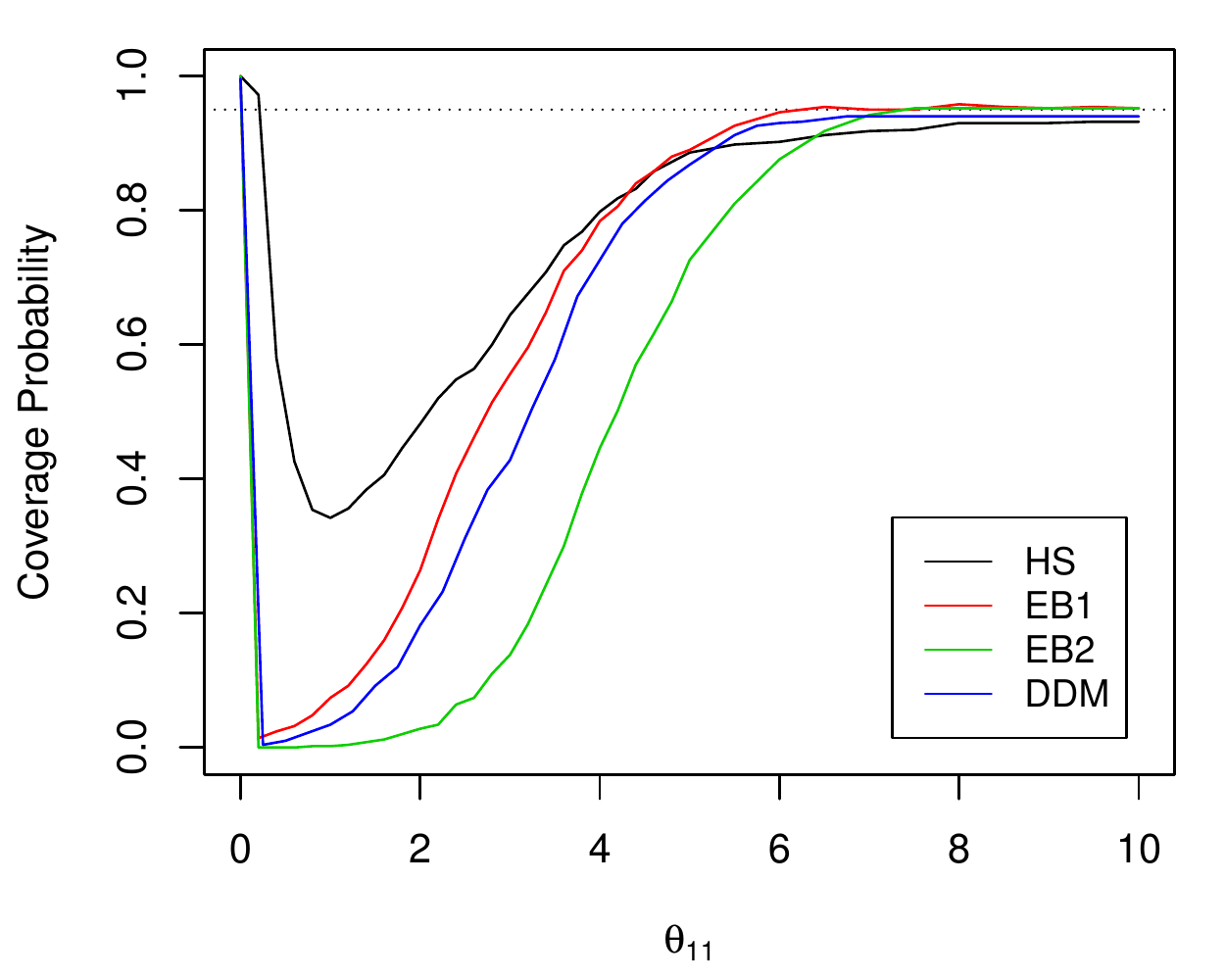}}}
\subfigure[Mean length]{\scalebox{0.6}{\includegraphics{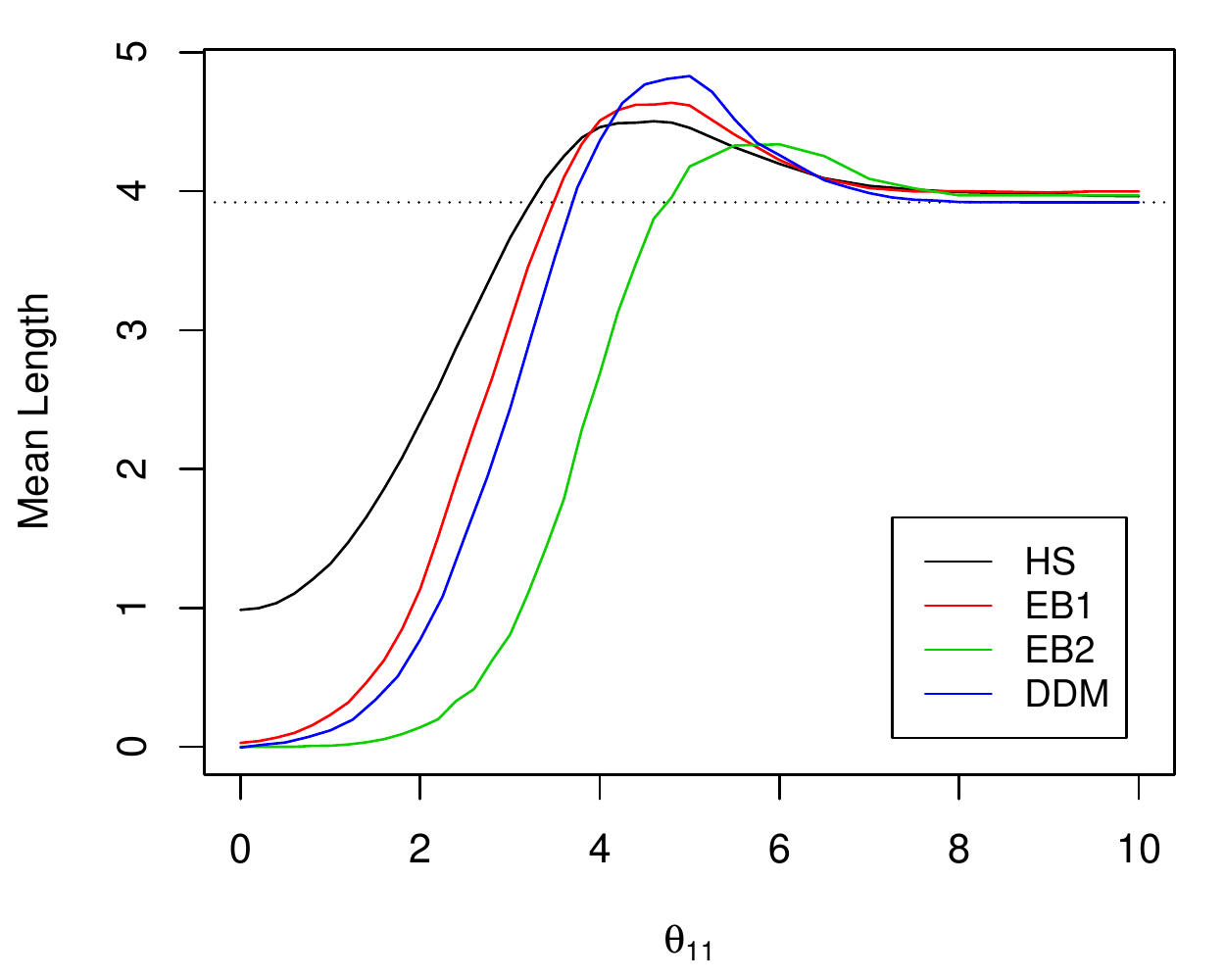}}}
\end{center}
\caption{Plots of the coverage probability and mean length of the marginal credible intervals for $\theta_{11}$, as a function of $\theta_{11}^\star$, based on the four methods: horseshoe (HS), a beta--binomial empirical prior \citep[EB1][]{martin.walker.eb}, a complexity-driven empirical prior \citep[EB2,][]{martin.mess.walker.eb}, and the data-dependent measure (DDM).}
\label{fig:cvg}
\end{figure}

\section{Conclusion}
\label{S:discuss}

In this paper, I've considered inference on a sparse, high-dimensional mean vector using a simple data-dependent measure.  Assuming only subgaussianity of the error distribution, I was able to show that the data-dependent measure has optimal asymptotic convergence properties in virtually every respect.  Most notably, I was able to establish that the data-dependent measure provides asymptotically valid uncertainty quantification in the sense that credible balls centered around the data-dependent measure's mean vector, with suitable data-driven choices of radius, can achieve the nominal coverage probability while maintaining roughly the optimal size.  My proofs of the various results presented herein are relatively straightforward thanks to the simple form of the data-dependent measure under investigation.  Moreover, the simple form makes computation, even for very large-scale problems, fast and easy.  The numerical illustration in Section~\ref{S:example} shows, however, that the theoretical and computational simplicity don't come at the expense of statistical efficiency or poor finite-sample performance.  

One possible extension is to explore the asymptotic behavior of this  data-dependent measure under assumptions on $\theta^\star$ that are mathematically weaker than my notion of sparsity here, e.g., under the so-called {\em excessive bias restriction} in \citet{belitser.ddm}.  I chose to work here with sparsity because it's a simpler and more intuitive condition, but I expect that more general results are possible, even with only subgaussianity assumptions.  

Other kinds of low-dimensional structure in the mean vector can likely be handled using similarly simple data-dependent measures.  For example, in a sequence model where the mean has a piecewise constant structure \citep[e.g.,][]{pas.rockova.2017, ebpiecep}, sparsity shows up in the successive differences, so things would not be too much different from the case considered here.  Clearly there are limitations to how far this kind of simple approach could go, but it's interesting and practically useful to find where the boundary is.  In the high-dimensional regression problem, for example, the theoretical support available in, say, \citet{ray.szabo.vb}, for mean-field variational approximations suggests that other simple and more directly defined data-dependent measures could have similar properties.  It's perhaps not surprising that posterior concentration rate results could be achieved even if the inherent correlation in the full posterior distribution is ignored, but it would be interesting to see if other structure learning or uncertainty quantification properties were similarly unaffected.  

It's a mathematical fact that credible sets in high-dimensions can't be both valid confidence sets and of adaptively optimal size.  The approach that most investigations have taken, including mine here, is to start with a data-dependent measure that achieves the optimal size property and show that its credible sets approximately achieve the target coverage probability too.  To me, the most interesting take-away message from this paper is that apparently very simple solutions can achieve this ``optimal size and approximate coverage'' property.  If both simple and not-so-simple solutions can achieve the same properties, then arguably the standard is too low.  How might we approach these structured high-dimensional problems in a more discriminating way?  One idea is to turn the line of reasoning around, that is, to start with something that achieves valid uncertainty quantification and think about how to introduce the assumed structure, e.g., sparsity, in such a way that efficiency is gained but validity isn't lost.  The approach I have in mind, with developments underway, is to start with a valid inferential model \citep[e.g.,][]{imbasics, imbook}, treat the assumed structure as genuine but incomplete prior information encoded as an imprecise probability, and combine with the inferential model output in an appropriate way that preserves validity.  The main difference between this and the standard approach taken in this paper is that validity is given higher priority than efficiency, which I believe to more appropriate for scientific investigations.



\appendix

\section{Proofs}
\label{S:proofs}

\subsection{Preliminary results}
\label{SS:prelim}

The only distributional assumption being made here is that the errors $Z_1,\ldots,Z_n$ in \eqref{eq:model} are iid copies of a random variable $Z$ with subgaussian tails.  As the name suggests, this condition implies that $Z$ has some Gaussian-like properties.  Here I collect a few relevant facts about subgaussian random variables that will be used in what follows.  

\begin{itemize}
\item It is well known that the square of a subgaussian random variable is subexponential. I won't need any specific properties of subexponential random variables, so there's no need to give a formal definition.  All that matters here is that subexponential random variables have a moment-generating function in an interval that contains the origin.  In particular, 
\begin{equation}
\label{eq:mgf1}
\E e^{t(Z/\sigma)^2} \lesssim 1, \quad t \in (0, T]. 
\end{equation}
Moreover, since translations don't effect the tails of a distribution, the moment-generating function of $(Z+u)^2$, for any $u$, also exists for some arguments, in particular, when the argument is negative, I get 
\begin{equation}
\label{eq:mgf2}
\E e^{-t(Z+u)^2/\sigma^2} \lesssim e^{-t u^2/\sigma^2}, \quad t > 0. 
\end{equation}
\item An equivalent definition of subgaussian random variables is that they admit an exponential tail probability bound just like the Gaussian.  In particular, 
\begin{equation}
\label{eq:tail.prob}
\prob(|Z| > t) \leq 2 e^{-t^2/2\sigma^2}, \quad t > 0. 
\end{equation}
\end{itemize} 

Next are two results that'll be needed in the proofs of the theorems below.  These make use of the properties for subgaussian random variables described above. 

\begin{lem}
\label{lem:sum.phi}
If $\lambda_n$ satisfies \eqref{eq:lambda} and $\alpha < 2T$, then 
\[ \sum_{i=1}^n \E_{\theta_i^\star} \phi_i \leq |S^\star| + o(1) \lesssim |S^\star|. \]
\end{lem}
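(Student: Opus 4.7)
The plan is to split the sum $\sum_{i=1}^n \E_{\theta_i^\star}\phi_i$ according to whether $i\in S^\star$ or $i\notin S^\star$. For indices $i\in S^\star$ I will use the trivial bound $\phi_i\le 1$, contributing at most $|S^\star|$ to the total. The real work lies in showing that the sum over $i\notin S^\star$ is $o(1)$.

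For the off-support contribution, the key step is to convert the definition of $\phi_i$ through the logit formula in \eqref{eq:pars} into a usable exponential upper bound. Using the elementary inequality $\frac{e^x}{1+e^x}\le e^x$, I get
\[ \phi_i \;\le\; \frac{\lambda_n}{1-\lambda_n}\,\sqrt{\tfrac{\gamma}{\alpha+\gamma}}\,\exp\!\Bigl(\tfrac{\alpha}{2\sigma^2}Y_i^2\Bigr). \]
For $i\notin S^\star$ we have $\theta_i^\star=0$, so $Y_i=Z_i$ and I need to control $\E\exp\bigl(\tfrac{\alpha}{2}(Z_i/\sigma)^2\bigr)$. This is exactly where the hypothesis $\alpha<2T$ enters: since $\alpha/2<T$, the bound \eqref{eq:mgf1} for the moment generating function of the subexponential variable $(Z/\sigma)^2$ applies and yields $\E\exp\bigl(\tfrac{\alpha}{2}(Z_i/\sigma)^2\bigr)\lesssim 1$.

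Combining these, each off-support term satisfies $\E_{\theta_i^\star}\phi_i \lesssim \lambda_n$. Summing over at most $n$ indices and invoking the rate \eqref{eq:lambda}, namely $\lambda_n = O(n^{-(1+a)})$, gives
\[ \sum_{i\notin S^\star} \E_{\theta_i^\star}\phi_i \;\lesssim\; n\,\lambda_n \;=\; O(n^{-a}) \;=\; o(1), \]
since $a>0$. Adding back the on-support contribution of at most $|S^\star|$ delivers the first inequality. The second inequality $|S^\star|+o(1)\lesssim |S^\star|$ is automatic in the nontrivial regime $|S^\star|\ge 1$.

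I don't anticipate any genuine obstacle; the one place where care is needed is the identification of the hypothesis $\alpha<2T$ as exactly the condition required to make the moment generating function of $(Z/\sigma)^2$ finite at the argument $\alpha/2$, and this is handled by citing \eqref{eq:mgf1} from Appendix~\ref{SS:prelim}.
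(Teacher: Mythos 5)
Your proof is correct and follows essentially the same route as the paper's: split over $S^\star$ and its complement, bound $\phi_i$ by $e^{\logit(\phi_i)}$ (your explicit expansion of the logit formula is exactly the paper's $\xi_n \exp\{\tfrac{\alpha}{2}(Z/\sigma)^2\}$), and invoke \eqref{eq:mgf1} under $\alpha<2T$ together with $\lambda_n=O(n^{-(1+a)})$ to make the off-support sum $o(1)$. No gaps.
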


\begin{proof}
First, split the sum as 
\[ \sum_{i=1}^n \E_{\theta_i^\star} \phi_i = \sum_{i \in S^\star} \E_{\theta_i^\star} \phi_i + \sum_{i \not\in S^\star} \E_{\theta_i^\star} \phi_i. \]
Since $\phi_i \leq 1$, the sum over $i \in S^\star$ is clearly $\leq |S^\star|$.  For the sum over $i \not\in S^\star$, note that all the means are zero and, therefore, all the terms in the sum are the same, i.e.,
\[ \sum_{i \not\in S^\star} \E_{\theta_i^\star} \phi_i = (n - |S^\star|) \E_0 \phi_i. \]
Again, since $\phi_i \leq 1$, it follows that 
\[ \E_0 \phi_i \leq \E_0 e^{\logit(\phi_i)} = \xi_n \, \E e^{\frac{\alpha}{2} (Z/\sigma)^2}, \]
where 
\[ \xi_n = \exp\{\logit(\lambda_n) + \tfrac12 \log\tfrac{\gamma}{\alpha + \gamma}\}. \]
Since $\alpha < 2T$ by assumption, it follows from \eqref{eq:mgf1} that 
\[ \E_0 \phi_i \lesssim \xi_n \lesssim \exp\{\logit(\lambda_n)\} = n^{-(1+a)}, \]
which implies 
\[ \sum_{i \not\in S^\star} \E_{\theta_i^\star} \phi_i \lesssim n^{-a} = o(1), \quad \text{as $n \to \infty$}. \]
Combining this with the bound from the sum over $i \in S^\star$ completes the proof.  
\end{proof}

\begin{lem}
\label{lem:L2}
If the $\lambda_n$ in \eqref{eq:ddm} satisfies \eqref{eq:lambda}, then 
\[ \E_{\theta^\star} \int \|\theta - \theta^\star\|^2 \, \Delta^n(d\theta) \lesssim \eps_n^2(\theta^\star). \]
\end{lem}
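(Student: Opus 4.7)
The plan is to use the product structure of $\Delta^n$: by coordinate-wise independence,
\[ \int \|\theta - \theta^\star\|^2 \, \Delta^n(d\theta) = \sum_{i=1}^n \int (\theta_i - \theta_i^\star)^2 \, \Delta_i^n(d\theta_i), \]
where $\Delta_i^n = \phi_i\,\nm(Y_i, \tau_i^2) + (1-\phi_i)\,\delta_0$. A direct calculation of the $i$th marginal second moment gives $\phi_i \tau_i^2 + \phi_i(Y_i - \theta_i^\star)^2 + (1-\phi_i)(\theta_i^\star)^2$, and since $Y_i - \theta_i^\star = Z_i$ and $\tau_i^2 = \sigma^2/(\alpha+\gamma) =: \tau^2$, applying $\E_{\theta^\star}$ and summing yields three pieces:
\[ \tau^2 \sum_i \E \phi_i, \qquad \sum_i \E[\phi_i Z_i^2], \qquad \sum_{i \in S^\star}(\theta_i^\star)^2 \,\E(1-\phi_i), \]
which I would bound separately.

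For piece one, Lemma~\ref{lem:sum.phi} gives $\sum_i \E\phi_i \lesssim |S^\star|$, and since $\tau^2 = O(1)$, the piece is $\lesssim |S^\star| \leq \eps_n^2(\theta^\star)$. For piece two, I would split at $S^\star$: when $i \in S^\star$, bounding $\phi_i \leq 1$ and $\E Z_i^2 \leq \sigma^2$ contributes $\lesssim |S^\star|$; when $i \notin S^\star$, where $Y_i = Z_i$, the inequality $\phi_i \leq \xi_n\, e^{\alpha Z_i^2/(2\sigma^2)}$ combined with the subexponential MGF bound \eqref{eq:mgf1} (using $\alpha < 2T$) and the elementary inequality $x \leq \eta^{-1} e^{\eta x}$ applied to $x = Z_i^2$, with $\eta$ small enough that $\alpha/(2\sigma^2) + \eta < T/\sigma^2$, yields $\E[\phi_i Z_i^2] \lesssim \xi_n$, so this portion sums to $\lesssim n\xi_n = n^{-a} = o(1)$.

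Piece three is the crux. I would use
\[ 1-\phi_i = \{1 + \xi_n\, e^{\alpha Y_i^2/(2\sigma^2)}\}^{-1} \leq \min\bigl\{1,\, \xi_n^{-1} e^{-\alpha Y_i^2/(2\sigma^2)}\bigr\} \]
and apply concavity of $\min$ via Jensen together with \eqref{eq:mgf2} to obtain $\E(1-\phi_i) \lesssim \min\{1,\, n^{1+a}\, e^{-\alpha (\theta_i^\star)^2/(2\sigma^2)}\}$. Multiplying by $(\theta_i^\star)^2$, a one-variable calculus argument shows that the scalar map $\beta \mapsto \beta \min\{1,\, n^{1+a}\, e^{-\alpha \beta/(2\sigma^2)}\}$ attains a uniform maximum of order $\log(en/|S^\star|)$ across the relevant range, so summing over $i \in S^\star$ yields $\lesssim |S^\star|\log(en/|S^\star|) = \eps_n^2(\theta^\star)$.

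The main obstacle is precisely this third piece: one has to balance the heavy shrinkage driven by $\xi_n \sim n^{-(1+a)}$ against the exponential decay of $\E e^{-\alpha Y_i^2/(2\sigma^2)}$ supplied by \eqref{eq:mgf2}, and show that the worst per-coordinate contribution in this trade-off still aggregates to $\eps_n^2(\theta^\star)$. The constraint $\alpha < 2T$ from the subgaussian tail assumption is exactly what makes both the MGF-type expectations finite and the required balance achievable; combining the three pieces then delivers the claimed bound.
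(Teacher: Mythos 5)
Your overall route is the same as the paper's: the same coordinate-wise second-moment identity, the same three-piece split, Lemma~\ref{lem:sum.phi} for the first piece, and Jensen plus \eqref{eq:mgf2} for the $(1-\phi_i)\theta_i^{\star 2}$ piece. The one genuine departure is the second piece: for $i \notin S^\star$ the paper truncates at $|Y_i| \le x$ with $x^2 = 2\sigma^2\log(n/|S^\star|)$ and invokes the tail bound \eqref{eq:tail.prob}, arriving at a contribution $\lesssim \eps_n^2(\theta^\star)$, whereas you use $Z_i^2 \le \eta^{-1}e^{\eta Z_i^2}$ and absorb the extra exponential into \eqref{eq:mgf1}, getting the sharper bound $n\xi_n = o(1)$. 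That step is fine---the strict inequality $\alpha < 2T$ leaves room for the $\eta$ you need---and is arguably cleaner than the paper's truncation.

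The gap is in your third piece. The bound $\E(1-\phi_i) \lesssim \min\{1,\, \xi_n^{-1}e^{-k\theta_i^{\star 2}}\}$ with $k=\alpha/2\sigma^2$ is correct, but the supremum over $\beta = \theta_i^{\star 2} \ge 0$ of $\beta\min\{1,\, c\,n^{1+a}e^{-k\beta}\}$ is not of order $\log(en/|S^\star|)$: the map equals $\beta$ up to $\beta \approx k^{-1}(1+a)\log n$ and decays only beyond that point, so its maximum is of order $\log n$. Since $|S^\star|$ never enters the per-coordinate bound, there is no mechanism by which this maximum could produce the factor $\log(en/|S^\star|)$; the two quantities agree up to constants only when $|S^\star| \le n^{1-\delta}$ for some fixed $\delta>0$. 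What your argument actually delivers is $\sum_{i \in S^\star}\theta_i^{\star 2}\,\E(1-\phi_i) \lesssim |S^\star|\log n$, which is $\lesssim \eps_n^2(\theta^\star)$ only under such a polynomial sparsity restriction, not uniformly over $\theta^\star$. (For what it is worth, the paper's own handling of this term is no better: it plugs the mode $k^{-1}$ of the gamma-shaped \emph{numerator} into the ratio and declares the result $\lesssim 1$, but the ratio itself is maximized near $\beta \approx k^{-1}\log\xi_n^{-1}$, where it is of order $\log n$. Your version is, if anything, closer to the truth, but as written the claimed order of the maximum is unjustified and the final bound does not follow for all $\theta^\star$.)
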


\begin{proof}
By the definition of $\Delta^n$ in \eqref{eq:ddm}, it's easy to check that 
\begin{align*}
\int \|\theta - \theta^\star\|^2 \, \Delta^n(d\theta) & = \sum_{i=1}^n \int (\theta_i - \theta_i^\star)^2 \, \Delta^n(d\theta) \\
& = \sum_{i=1}^n \Bigl\{ \phi_i \int (\theta_i - \theta_i^\star)^2 \, \nm(\theta_i \mid Y_i, \tau_i^2) \, d\theta_i + (1-\phi_i) \theta_i^{\star 2} \Bigr\} \\
& = \sum_{i=1}^n \bigl[ \phi_i \{ \tau_i^2 + (Y_i - \theta_i^\star)^2 \} + (1-\phi_i) \theta_i^{\star 2} \bigr] \\
& = \sum_{i=1}^n \tau_i^2 \phi_i + \sum_{i \not\in S^\star} \phi_i Y_i^2 + \sum_{i \in S^\star} \{ \phi_i(Y_i - \theta_i^\star)^2 + (1-\phi_i) \theta_i^{\star 2} \}. 
\end{align*}
Note that $\tau_i^2$ are constant in $i$ and do not depend on data, this can come outside the same (and the following expectation).  Taking expectation, as using the fact that $\phi_i \leq 1$, the right-hand side is bounded by 
\[ \tau^2 \sum_{i=1}^n \E \phi_i + \sum_{i \not\in S^\star} \E \phi_i Y_i^2 + \sigma^2 |S^\star| + \sum_{i \in S^\star} \theta_i^{\star 2} \E(1-\phi_i). \]
I'll deal with each term in this sum separately.  The first term is $\lesssim |S^\star|$ by Lemma~\ref{lem:sum.phi}.  Second, consider $\E_{\theta_i^\star} \phi_i Y_i^2$ for $i \not\in S^\star$, which means $\theta_i^\star = 0$.  For $x > 0$ to be determined, 
\[ \E_0 \phi_i Y_i^2 = \E_0 \phi_i Y_i^2 1_{|Y_i| \leq x} + \E_0 \phi_i Y_i^2 1_{|Y_i| > x}. \]
The first term on the right-hand side is $x^2 \E_0 \phi_i \lesssim x^2 n^{-(a + 1)}$, as shown in the proof of Lemma~\ref{lem:sum.phi}.  The second term is bounded by $\E_0 Y_i^2 1_{|Y_i| > x} = \E Z^2 1_{|Z| > x}$.  For this, we can use the tail probability bound \eqref{eq:tail.prob} for $Z$ as follows:
\begin{align*}
\E Z^2 1_{|Z| > x} & \int_0^\infty \prob(Z^2 1_{|Z| > x} > t) \, dt \\
& = \int_0^\infty \prob\{|Z| > \max(x, t^{1/2})\} \, dt \\
& = \int_0^{x^2} \prob(|Z| > x) \, dt + \int_{x^2}^\infty \prob(|Z| > t^{1/2}) \, dt \\
& \leq 2x^2 e^{-x^2/2\sigma^2} + 2 \int_{x^2}^\infty e^{-t/2\sigma^2} \, dt \\
& \lesssim (x^2 + 1) e^{-x^2/2\sigma^2}.
\end{align*}
Take $x=\{2\sigma^2 \log(n / |S^\star|)\}^{1/2}$, so that 
\[ \E_0 \phi_i Y_i^2 \lesssim n^{-(a+1)} \log(n / |S^\star|) + |S^\star| n^{-1} \log(en / |S^\star|). \]
Summing over $i \not\in S^\star$ gives 
\begin{equation}
\label{eq:second}
\sum_{i \not\in S^\star} \E \phi_i y_i^2 \lesssim n^{-a} \log(n / |S^\star|) + |S^\star| \log(e n / |S^\star|) \lesssim \eps_n^2(\theta^\star). 
\end{equation}
Lastly, for the third term, recall that 
\[ 1 - \phi_i = 1 - \{1 + \xi_n^{-1} e^{-(\alpha/2\sigma^2) Y_i^2}\}^{-1}, \]
where $\xi_n \propto \exp\{-\logit(\lambda_n)\}$. Since $z \mapsto (1 + z)^{-1}$ is convex, Jensen's inequality says 
\[ \E_{\theta_i^\star}(1-\phi_i) \leq 1 - \{1 + \xi_n^{-1} \E_{\theta_i^\star} e^{- (\alpha/2\sigma^2) Y_i^2}\}^{-1}. \]
By \eqref{eq:mgf2}, the expectation satisfies 
\[ \E_{\theta_i^\star} e^{- (\alpha/2\sigma^2) Y_i^2} = \E e^{-(\alpha/2)(Z + \theta_i^\star)^2/\sigma^2} \leq c e^{-(\alpha/2) \theta_i^{\star 2}/\sigma^2}, \]
for a constant $c > 0$.  Therefore, 
\[ \E_{\theta_i^\star}(1-\phi_i) \leq \frac{c\xi_n^{-1} e^{-k \theta_i^{\star 2}}}{1 + c \xi_n^{-1} e^{-k \theta_i^{\star 2}}}, \]
where $k = \alpha/2\sigma^2$.  Multiplying by $\theta_i^{\star 2}$ gives 
\[ \theta_i^{\star 2} \, \E_{\theta_i^\star}(1-\phi_i) \leq \frac{c\xi_n^{-1} \theta_i^{\star 2} e^{-k \theta_i^{\star 2}}}{1 + c \xi_n^{-1} e^{-k \theta_i^{\star 2}}}. \]
As a function of $\theta_i^{\star 2}$, this has the form of a gamma density with shape parameter 2 and rate parameter $k$.  Such a density has mode $k^{-1}$.  Plugging in that mode, what's left is a bounded sequence in $n$, so the right-hand side above is $\lesssim 1$, which implies 
\[ \sum_{i \in S^\star} \theta_i^{\star 2} \, \E_{\theta_i^\star}(1-\phi_i) \lesssim |S^\star|. \]
Putting all the bounds together gives 
\[ \E_{\theta^\star} \int \|\theta - \theta^\star\|^2 \, \Delta^n(d\theta) \lesssim |S^\star| + \eps_n^2(\theta^\star) + 1 \lesssim \eps_n^2(\theta^\star). \qedhere \]
\end{proof}

\subsection{Proofs of Theorems~\ref{thm:rate}--\ref{thm:dim}}

\begin{proof}[Proof of Theorem~\ref{thm:rate}]
By Markov's inequality,
\[ \Delta^n(\{\theta: \|\theta-\theta^\star\|^2 > M_n \eps_n^2(\theta^\star)\}) \leq \frac{1}{M_n \eps_n^2(\theta^\star)} \int \|\theta-\theta^\star\|^2 \, \Delta^n(d\theta). \]
Taking expectation and applying the bound in Lemma~\ref{lem:L2} gives 
\[ \E_{\theta^\star} \Delta^n(\{\theta: \|\theta-\theta^\star\|^2 > M_n \eps_n^2(\theta^\star)\}) \lesssim M_n^{-1}, \]
and since $M_n \to \infty$, the claim follows.  
\end{proof}

\begin{proof}[Proof of Theorem~\ref{thm:mean}]
By Jensen's inequality, 
\[ \|\hat\theta - \theta^\star\|^2 \leq \int \|\theta - \theta^\star\|^2 \, \Delta^n(d\theta). \]
Then the claim follows by Lemma~\ref{lem:L2}.  
\end{proof}

\begin{proof}[Proof of Theorem~\ref{thm:dim}]
By Markov's inequality
\[ \Delta^n(\{\theta: |S_\theta| > M_n |S_{\theta^\star}|\}) \leq \frac{1}{M_n |S_{\theta^\star}|} \sum_{i=1}^n \phi_i, \]
where the sum on the right-hand side is the expectation of $|S_\theta|$ under $\theta \sim \Delta^n$.  Take expectation of both sides and apply the bound in Lemma~\ref{lem:sum.phi}, to get 
\[ \E_{\theta^\star} \Delta^n(\{\theta: |S_\theta| > M_n |S_{\theta^\star}|\}) \lesssim M_n^{-1}, \]
and, since $M_n \to \infty$, the claim follows.  
\end{proof}

\subsection{Proof of Theorem~\ref{thm:selection}}

Let $\delta^n$ denote the mass function of the marginal distribution of $S_\theta$ under $\theta \sim \Delta^n$, i.e., 
\[ \delta^n(S) = \Delta^n(\{\theta: S_\theta = S\}), \quad S \subseteq \{1,2,\ldots,n\}. \]
Also, for the given $\theta^\star$, let $S^\star = S_{\theta^\star}$.  From the simple form of $\Delta^n$, it's easy to check that 
\[ \delta^n(S) = \prod_{i \in S} \phi_i \cdot \prod_{i \not\in S} (1-\phi_i). \]
This leads to a convenient bound
\[ \delta^n(S) \leq \frac{\delta^n(S)}{\delta^n(S^\star)} = \prod_{i \in S \cap S^{\star c}} e^{\logit(\phi_i)} \, \prod_{i \in S^c \cap S^\star} e^{-\logit(\phi_i)}. \]
Since each $\phi_i$ only depends on $Y_i$, and these are independent, we can interchange the order of expectation and product.  Also, for those $i \in S^{\star c}$, with $\theta_i^\star=0$, the $\phi_i$'s are iid, so each term in that product has the same expectation.  Therefore, 
\[ \E_{\theta^\star} \delta^n(S) \leq \bigl\{ \E_0 e^{\logit(\phi_1)} \bigr\}^{|S \cap S^{\star c}|} \, \prod_{i \in S^c \cap S^\star} \E_{\theta_i^\star} e^{-\logit(\phi_i)}. \]
By the moment-generating function bounds in \eqref{eq:mgf1} and \eqref{eq:mgf2}, 
\begin{align*}
\E_0 e^{\logit(\phi_1)} & \leq c_0 \xi_n \\
\E_{\theta_i^\star} e^{-\logit(\phi_i)} & \leq c_1 \xi_n^{-1} e^{-k\theta_i^{\star 2}},
\end{align*}
where $k = \alpha/2\sigma^2$, $\xi_n = \exp\{\logit(\lambda_n)\} \sim n^{-(1+a)}$, and $c_0$ and $c_1$ are the hidden proportionality constants in \eqref{eq:mgf1} and \eqref{eq:mgf2}, respectively.  

Note also that, by Theorem~\ref{thm:dim}, the $\delta^n$-probability of the event ``$|S| > M_n|S^\star|$'' has vanishing expectation for any $M_n \to \infty$.  Thanks to this, I can immediately restrict my attention to those $S$ such that ``$|S| \leq M_n |S^\star|$'' in what follows.  

Now consider two distinct cases separately, namely, $S \supset S^\star$ and $S \not\supseteq S^\star$.  First, for any $S \supset S^\star$, it follows that $|S^c \cap S^\star|=0$.  So
\begin{align*}
\E_{\theta^\star} \delta^n(S: S \supset S^\star) & \leq \sum_{S: S \supset S^\star, |S| \leq M_n|S^\star|} \{ \E_0 e^{\logit(\phi_1)} \}^{|S \cap S^{\star c}|} \\
& = \sum_{t=1}^{M_n|S^\star|} \binom{n-|S^\star|}{t} \{ \E_0 e^{\logit(\phi_1)} \}^t \\
& \leq \sum_{t=1}^{M_n|S^\star|} \{ e(n - |S^\star|)\E_0 e^{\logit(\phi_1)} \}^t \\ 
& \lesssim (n - |S^\star|) \xi_n.
\end{align*}
By definition of $\xi_n$, the upper bound is vanishing as $n \to \infty$, proving the first claim.

Next, for any $S \not\supseteq S^\star$, there must be at least one component in $S^\star$ that is {\em not included} in $S$.  So, 
\begin{align*}
\E_{\theta^\star} \delta^n(S: S \not\supseteq S^\star) & \leq \sum_{S: S \not\supseteq S^\star, |S| \leq M_n|S^\star|} \Bigl[ \bigl\{ \E_0 e^{\logit(\phi_1)} \bigr\}^{|S \cap S^{\star c}|} \, \prod_{i \in S^c \cap S^\star} \E_{\theta_i^\star} e^{-\logit(\phi_i)} \Bigr] \\
& \leq \sum_{S: S \not\supseteq S^\star, |S| \leq M_n|S^\star|} (c_0 \xi_n)^{|S \cap S^{\star c}|} (c_1 \xi_n^{-1} e^{-k H^2})^{|S^c \cap S^\star|} \\
& = \sum_{s=0}^{M_n|S^\star|} \sum_{t=0}^{s \wedge (|S^\star|-1)} \binom{|S^\star|}{t} \binom{n-|S^\star|}{s-t} (c_0 \xi_n)^{s-t} (c_1 \xi_n^{-1} e^{-k H^2})^{|S^\star|-t} \\ 
& \leq \sum_{s=0}^{M_n|S^\star|} \sum_{t=0}^{s \wedge (|S^\star|-1)} \{c_0 (n-|S^\star|)\xi_n\}^{s-t} \{c_1 |S^\star| \xi_n^{-1} e^{-k H^2}\}^{|S^\star|-t}. 
\end{align*}
(In the above derivation, $s$ represents $|S|$ and $t$ represents $|S \cap S^\star|$, which implies $s-t=|S \cap S^{\star c}|$ and $|S^\star|-t = |S^c \cap S^\star|$.)  Note that $t < |S^\star|$ because $S \not\supseteq S^\star$ implies that $S$ can't include all the entries in $S^\star$.  This means that there is a constant factor 
\begin{equation}
\label{eq:common}
|S^\star| \xi_n^{-1} e^{-k H^2}, 
\end{equation}
which goes to 0 as $n \to \infty$ if $H$ is as in \eqref{eq:betamin}.  The terms that involve $(n-|S^\star|) \xi_n$ also vanish as in the previous case above.  So all the terms in the sum are geometrically small, hence the sum is bounded.  But since the common factor \eqref{eq:common} vanishes, the upper bound itself vanishes, proving the second claim of the theorem.  


\subsection{Proof of Theorem~\ref{thm:uq.ball}}

For any data-dependent radius $\hat\rho$, ``$B_n(\hat\rho) \not\ni \theta^\star$'' is equivalent to ``$\|\hat\theta-\theta^\star\| > \hat\rho$,'' and the following decomposition, which holds for any deterministic $R > 0$, is helpful:
\begin{align}
\prob_{\theta^\star}\{\|\hat\theta - \theta^\star\| > \hat \rho\} & \leq \prob_{\theta^\star}\{\|\hat\theta - \theta^\star\| > \hat \rho, \, \hat\rho > R\} + \prob_{\theta^\star}\{\hat \rho \leq R\} \notag \\
& \leq \prob_{\theta^\star}\{\|\hat\theta - \theta^\star\|^2 > R^2\} + \prob_{\theta^\star}\{\hat \rho^2 \leq R^2\}. \label{eq:square}
\end{align}
The first term in \eqref{eq:square} has nothing to do with the radius, so this can be approached the same way for both types of credible balls.  Indeed, by Markov's inequality, 
\[ \prob_{\theta^\star}\{\|\hat\theta - \theta^\star\|^2 > R^2\} \leq R^{-2} \E_{\theta^\star}\|\hat\theta - \theta^\star\|^2. \]
By Theorem~\ref{thm:mean}, the expectation in the upper bound is no more than $M'\eps_n^2(\theta^\star)$ for some constant $M' > 0$ and $\eps_n^2(\theta^\star)$ in \eqref{eq:eps}.  Therefore, if $R^2$ is a suitable multiple of $\eps_n^2(\theta^\star)$, then the first term in \eqref{eq:square} can be made less than a fraction of $\zeta$.  The specific constants depend on how $\hat r$ is defined, and the details for each case are presented below.  

\begin{proof}[Proof for the {\sc quantile-based} radius]
Start with a bound on the non-coverage probability for the  quantile-based radius.  Here I'll bound the non-coverage probability by a sum of three terms, each will be bounded by $\zeta/3$, for sufficiently large $n$.  The first of these three terms comes from the above analysis, so we set $R^2 = (3M'/\zeta) \eps_n^2(\theta^\star)$ and conclude 
\[ \prob_{\theta^\star}\{\|\hat\theta - \theta^\star\|^2 > R^2\} \leq \zeta/3. \]
Next, to bound the second term in \eqref{eq:square}, recall that $\hat\rho^2 = M^2 g_n^2 \hat r^2$, where $\hat r$ is based on the $(1-\zeta)$-quantile of $\Delta^n$ and $M$ is some sufficiently large constant yet to be determined.  Define $\tR = (M g_n)^{-1/2} R$.  Then by definition of $\hat r$, and Markov's inequality (again),  
\begin{align*}
\prob_{\theta^\star}\{\hat\rho \leq R\} & = \prob_{\theta^\star}\{\hat r \leq \tR\} \\
& \leq \prob_{\theta^\star}\{ \Delta^n(\|\theta-\hat\theta\| \leq \tR) \geq 1-\zeta\} \\
& \leq (1-\zeta)^{-1} \E_{\theta^\star}\Delta^n(\|\theta-\hat\theta\| \leq \tR). 
\end{align*}
So the second term in \eqref{eq:square} can be upper-bounded if the above expectation can be upper-bounded.  Towards this, let $S^\star = S_{\theta^\star}$ and use the total probability formula to write 
\begin{align*}
\Delta^n(\|\theta - \hat\theta\| \leq \tR) & = \sum_S  \Delta^n(\|\theta - \hat\theta\| \leq \tR \mid S) \, \delta^n(S) \\
& \leq \{1-\delta^n(S^\star)\} + \Delta^n(\|\theta - \hat\theta\| \leq \tR \mid S^\star).
\end{align*}
Under the conditions of Theorem~\ref{thm:selection}, the first term has expectation that vanishes with $n$, hence is eventually smaller than $\zeta/3$.  So it remains to look at the second term in the above display.  The conditional distribution of $\theta$ under $\Delta^n$, given $S^\star$, is 
\[ \theta \sim \nm_{|S^\star|}(Y_{S^\star}, \tau^2 I) \otimes \delta_{|S^{\star c}|}, \]
where $\tau^2 = \sigma^2 (\alpha + \gamma)^{-1}$ from \eqref{eq:pars} is deterministic.  For such a $\theta$, 
\begin{align*}
\|\theta - \hat\theta\|^2 & \overset{d}{=} \sum_{i \in S^\star} \{\tau G_i + (1-\phi_i)Y_i\}^2 + \sum_{i \not\in S^\star} (\phi_i Y_i)^2 \\
& \geq \sum_{i \in S^\star} \{\tau G_i + (1-\phi_i)Y_i\}^2, 
\end{align*}
where $G_{S^\star} = \{G_i: i \in S^\star\}$ are iid $\nm(0,1)$.  Then 
\begin{align*}
\Delta^n(\|\theta-\hat\theta\| \leq \tR \mid S^\star) & \leq \prob\Bigl\{ \sum_{i \in S^\star} \{\tau G_i + (1-\phi_i)Y_i\}^2 \leq \tR^2 \Bigr\} \\
& \leq \prob\{\tau^2 \|G_{S^\star}\|^2 \leq \tR^2\}, 
\end{align*}
where the second line follows by Anderson's inequality. Note that $\|G_{S^\star}\|^2 \sim \chisq(|S^\star|)$, which means that $\|G_{S^\star}\|^2$ scales like $|S^\star$.  Consider two cases: $|S^\star| = O(1)$ and $|S^\star| \to \infty$.  In the former case, $\|G_{S^\star}\|^2$ is stochastically bounded, so the upper bound in the above display can be made less than $\zeta/3$ if $\tR$ is small or, equivalently, if $M$ is sufficiently large.  For the latter case, with $|S^\star| \to \infty$, the above probability can be bounded as 
\begin{align*}
\|G_{S^\star}\|^2 \leq \tau^{-2} \tR^2 & \iff \|G_{S^\star}\|^2 - |S^\star| \leq \tau^{-2} \tR^2 - |S^\star| \\
& \iff \|G_{S^\star}\|^2 - |S^\star|  \leq -w |S^\star|, 
\end{align*}
where $w = 1 - (\tau |S^\star|)^{-1} \tR^2$.  To see that $w > 0$ or, equivalently, that $\tR^2 < \tau |S^\star|$ for sufficiently large $n$, plug in the definition of $\tR$ to get 
\[ \frac{(3M'/\zeta) \eps_n^2(\theta^\star)}{M g_n} < \tau|S^\star| \iff \frac{\log(en/|S^\star|)}{\log(en)} < \frac{M \tau}{3 M'/\zeta}. \]
Note that the right-most inequality holds if $M > 3M'/(\tau \zeta)$. Therefore, 
\[ \prob\{\tau^2 \|G_{S^\star}\|^2 \leq \tR^2\} \leq \prob\{ \|G_{S^\star}\|^2 - |S^\star|  \leq - w |S^\star| \}. \]
Using a standard tail probability bound
for chi-square random variables \citep[e.g.,][Lemma~1]{massart2000}, the probability in the above display is bounded by 
\[ \prob\{\tau \|G_S\|^2 \leq \tR^2\} \leq \prob\{\|G_{S^\star}\|^2 - |S^\star| \leq -2(|S^\star| x)^{1/2}\} \leq e^{-x}, \]
where $x = w^2 |S^\star|/4$.  Since $|S^\star| \to \infty$, this upper bound will eventually be less than $\zeta/3$ for $M$ as above.  Putting everything together, all three terms upper bounding the non-coverage probability are less than $\zeta/3$ for $n$ sufficiently large.   

Now, for the size of the credible ball.  Write $\eps_n^2 = \eps_n^2(\theta^\star)$.  By definition of $\hat r$, 
\[ \hat r^2 > L \eps_n^2 \iff \Delta^n(\|\theta-\hat\theta\| \leq L \eps_n^2) < 1-\zeta \iff \Delta^n(\|\theta-\hat\theta\| > L \eps_n^2) > \zeta. \]
For the constant $M'$ used above from Theorem~\ref{thm:mean}, and for the specified threshold $\eta > 0$, take $L \geq M'\{1 + (\zeta\eta/2)^{-1/2}\}^2$.  Then the triangle inequality implies 
\[ \Delta^n(\|\theta-\hat\theta\|^2 > L\eps_n^2) \leq \Delta^n(\|\theta-\theta^\star\|^2 > M'\eps_n^2) + 1\{\|\hat\theta - \theta^\star\|^2 > (\zeta\eta/2)^{-1} M' \eps_n^2\}. \]
Using Markov's inequality twice gives 
\begin{align*}
\prob_{\theta^\star}\{\hat r^2 > L \eps_n^2\} & \leq \zeta^{-1} \E_{\theta^\star} \Delta^n(\|\theta-\hat\theta\| > L \eps_n^2) \\
& \leq \zeta^{-1} \Bigl\{ \E_{\theta^\star} \Delta^n(\|\theta-\theta^\star\|^2 > M' \eps_n^2) + \frac{\zeta \eta \E_{\theta^\star} \|\hat\theta - \theta^\star\|^2}{2 M' \eps_n^2} \Bigr\}. 
\end{align*}
The first term in the curly brackets is vanishing as $n \to \infty$ by Theorem~\ref{thm:rate} and, hence, will eventually be less than $\zeta\eta/2$.  By Theorem~\ref{thm:mean} and the definition of $M'$, the second term in the curly brackets is no more than $\zeta\eta/2$.  Therefore, the upper bound is no more than $\eta$, which proves the claim.  
\end{proof}

\begin{proof}[Proof for the {\sc plug-in estimator-based} radius]
Following the same argument as above, let $R^2 = (2M'/\zeta) \eps_n^2(\theta^\star)$. Then Theorem~\ref{thm:mean} implies that the first term in \eqref{eq:square} is no more than $\zeta/2$.  For the second term in \eqref{eq:square}, note that 
\begin{equation}
\label{eq:mono}
\text{$x \mapsto x \log(en / x)$ is increasing on $[0,n]$}.
\end{equation}
This implies that, for sufficiently small $c > 0$, 
\begin{align*}
|\hat S| \log \tfrac{en}{|\hat S|} \leq c |S^\star| \log \tfrac{en}{|S^\star|} & \implies |\hat S| \log\tfrac{en}{|\hat S|} \leq c |S^\star| \log\tfrac{en}{c |S^\star|} \\
& \iff |\hat S| \leq c |S^\star|. 
\end{align*}
Therefore, to get a bound on $\prob_{\theta^\star}\{\hat \rho^2 \leq R^2\}$, with $\hat\rho = M \hat r$, it suffices to bound 
\[ \prob_{\theta^\star}\{|\hat S| \leq c|S^\star|\}, \]
where $c = 2M'/M\zeta$, which can be made small with suitable choice of $M$.  Note that $|\hat S|$ is a sum of independent but non-identically distributed Bernoulli random variables, so its expected value is 
\[ \mu^\star = \E_{\theta^\star}|\hat S| = \sum_{i \in S^\star} \prob_{\theta_i^\star}(\phi_i > \tfrac12) + (n-|S^\star|) \prob_0(\phi_i > \tfrac12). \]
Both lower and upper bounds for $\mu^\star$ are needed.  Towards this, 
\[ \prob_0(\phi_i > \tfrac12) = \prob_0\{\text{logit}(\phi_i) > 0\} = \prob\bigl[(Z/\sigma)^2 > \tfrac{2\sigma^2}{\alpha}\{\logit(\lambda_n) + \tfrac12 \log \tfrac{\gamma}{\alpha + \gamma}\} \bigr]. \]
From the tail probability bound \eqref{eq:tail.prob}, it follows that 
\[ \prob_0(\phi_i > \tfrac12) \lesssim n^{-(1+a)}, \quad i \not\in S^\star,\]
and, therefore, 
\begin{equation}
\label{eq:mu.star}
\mu^\star \leq |S^\star| + n^{-(1+a)}(n-|S^\star|) = \{1 + o(n^{-a})\} |S^\star|. 
\end{equation}
For the lower bound on $\mu^\star$, 
\[ \mu^\star \geq \sum_{i \in S^\star} \prob_{\theta_i^\star}(\phi_i > \tfrac12) \geq |S^\star| \prob_H(\phi_i > \tfrac12), \]
where $H$ is the minimum (non-zero) signal size in \eqref{eq:betamin}.  Then 
\begin{align*}
\prob_H(\phi_i > \tfrac12) & = 1 - \prob_H\{\logit(\phi_i) < 0\} \\
& = 1-\prob_H\bigl[ -\tfrac{\alpha}{2\sigma^2}Y_i^2 > -\{\logit(\lambda_n) + \tfrac12 \log\tfrac{\gamma}{\alpha + \gamma}\} \bigr].
\end{align*}
Apply the exponential function to both sides of the inequality inside $\prob_H(\cdots)$ and then use Markov's inequality, the bound in \eqref{eq:mgf2}, the size of $H$ in \eqref{eq:betamin}, and an argument like that at the end of the proof of Theorem~\ref{thm:selection} to get 
\[ \prob_H\bigl[ -\tfrac{\alpha}{2\sigma^2}Y_i^2 > -\{\logit(\lambda_n) + \tfrac12 \log\tfrac{\gamma}{\alpha + \gamma}\} \bigr] = o(1), \quad n \to \infty. \]
Therefore, $\mu^\star \geq \{1 - o(1)\} |S^\star|$.  Using the standard Chernoff bounds for Bernoulli random variables, we get 
\[ \prob_{\theta^\star}\{|\hat S| \leq c |S^\star|\} = \inf_{t > 0} e^{t|S^\star|} \E_{\theta^\star} e^{-t|\hat S|} = \exp\Bigl\{ c |S^\star| \log \frac{\mu^\star}{c |S^\star|} + c|S^\star| - \mu^\star\Bigl\}. \]
Plug in the lower and upper bounds for $\mu^\star$ to get 
\[ \prob_{\theta^\star}\{|\hat S| \leq c |S^\star|\} \leq \exp\Bigl[ |S^\star| \Bigl\{ c \log \frac{1 + o(n^{-a})}{c} - (1-c) + o(1) \Bigr\} \Bigr]. \]
The term inside $\{\cdots\}$ is negative for $c < 1$ and $n$ sufficiently large, so the right-hand side can be upper-bounded by $\zeta/2$ if $M$ is a sufficiently large multiple of $2M'/\zeta$.  

Finally, for the size of the credible ball, by the same monotonicity property \eqref{eq:mono} used above, it follows that 
\[ \prob_{\theta^\star}\{\hat r > L \eps_n^2\} \leq \prob_{\theta^\star}\{|\hat S| > L |S^\star|\}. \]
By Markov's inequality and the upper bound on $\mu^\star = \E_{\theta^\star}|\hat S|$ in \eqref{eq:mu.star}, 
\[ \prob_{\theta^\star}\{|\hat S| > L |S^\star|\} \leq \frac{\{1 + o(n^{-a})\} |S^\star|}{L|S^\star|}. \]
Therefore, if $L > \eta^{-1}$, then the size condition $\prob_{\theta^\star}\{\hat r^2 > L \eps_n^2\} \leq \eta$ holds.  
\end{proof}

\bibliographystyle{apalike}
\bibliography{/Users/rgmarti3/Dropbox/Research/mybib}

\end{document}